\newtheorem{theorem}{Theorem}[section]
\newtheorem{lemma}[theorem]{Lemma}
\newtheorem{problem}[theorem]{Problem}
\theoremstyle{definition}
\begin{document}
\title{Maxima of the $Q$-index of non-bipartite $C_{3}$-free graphs}
\author{{\bf Ruifang Liu$^{a}$}, ~{\bf Lu Miao$^{a}$}\thanks{Corresponding author.
E-mail addresses: rfliu@zzu.edu.cn (R. Liu); miaolu0208@163.com (M. Lu);
jie\_xue@126.com (J. Xue).}, ~{\bf Jie Xue$^{a}$}~\\
{\footnotesize $^a$ School of Mathematics and Statistics, Zhengzhou University, Zhengzhou, Henan 450001, China}}

\date{}
\maketitle
{\flushleft\large\bf Abstract}
A classic result in extremal graph theory, known as Mantel's theorem, states that every non-bipartite graph of order $n$ with size $m>\lfloor \frac{n^{2}}{4}\rfloor$  contains a triangle. Lin, Ning and Wu [Comb. Probab. Comput. 30 (2021) 258-270] proved a spectral version of Mantel's theorem for given order $n.$ Zhai and Shu
[Discrete Math. 345 (2022) 112630] investigated a spectral version for fixed size $m.$ In this paper, we prove $Q$-spectral versions of Mantel's theorem.

\begin{flushleft}
\textbf{Keywords:} $Q$-index, Non-bipartite graph, Triangle-free graph
\end{flushleft}
\textbf{AMS Classification:} 05C50; 05C35

\section{Introduction}
Let $A(G)$ be the adjacency matrix and $D(G)$ be the diagonal degree matrix of a graph $G$.
The matrix $Q(G)=D(G)+A(G)$ is known as the $Q$-matrix or the signless Laplacian matrix of $G.$
The largest eigenvalue of $Q(G),$ denoted by $q(G),$ is called the \emph{$Q$-index} or the signless
Laplacian spectral radius of $G.$ By Perron-Frobenius theorem, every connected graph $G$
exists a positive unit eigenvector corresponding to $q(G)$,
which is called the \emph{Perron vector} of $Q(G)$. Note that isolated vertices do not have an effect on the $Q$-index.
Throughout this paper we consider graphs without isolated vertices. Let $n$ and $m$ be the order and the size of $G$, respectively.

A graph is said to be \emph{$H$-free}, if it does not contain $H$ as a subgraph.
Tur\'{a}n-type problem is posed as follows (see \cite{FS}):
For a family $\mathbb{U}$ of graphs which contain no
$H$ as a subgraph, we have two parameters for any $G\in \mathbb{U}$. The goal is to maximize the second parameter under the condition that $G$ is $H$-free
and its first parameter is given.
The investigation of Tur\'{a}n-type problem can date back at least to Mantel in 1907.

\begin{theorem}[\!\cite{Mantel}]
Every graph of order $n$ with $m>\lfloor \frac{n^{2}}{4}\rfloor$ contains a triangle.
\end{theorem}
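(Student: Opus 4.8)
The plan is to prove the contrapositive of Mantel's theorem: every \emph{triangle-free} graph $G$ of order $n$ satisfies $|E(G)|\le\lfloor n^{2}/4\rfloor$ (and, if one wants it, equality holds only for the balanced complete bipartite graph). The one structural fact that drives the whole argument is local: if $G$ is triangle-free and $uv\in E(G)$, then no vertex is adjacent to both $u$ and $v$, so $N(u)\cap N(v)=\emptyset$ and therefore
\[ d(u)+d(v)\le n. \]

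\textbf{First route: induction on $n$.} The cases $n=1,2$ are immediate. For the inductive step assume the bound for all smaller orders, and let $G$ be triangle-free on $n\ge 3$ vertices; we may assume $G$ has an edge $uv$, since otherwise $|E(G)|=0$. Set $G'=G-\{u,v\}$, which is triangle-free on $n-2$ vertices, so $|E(G')|\le\lfloor(n-2)^{2}/4\rfloor$ by the induction hypothesis. Every edge of $G$ not present in $G'$ is incident with $u$ or $v$, and there are exactly $d(u)+d(v)-1$ such edges ($uv$ being the only one counted twice); hence, using $d(u)+d(v)\le n$,
\[ |E(G)|\le\Big\lfloor\frac{(n-2)^{2}}{4}\Big\rfloor+\big(d(u)+d(v)-1\big)\le\Big\lfloor\frac{(n-2)^{2}}{4}\Big\rfloor+(n-1)=\Big\lfloor\frac{n^{2}}{4}\Big\rfloor, \]
the last equality being the elementary identity $\lfloor(n-2)^{2}/4\rfloor+(n-1)=\lfloor n^{2}/4\rfloor$, which one checks separately for $n$ even and $n$ odd. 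Contraposing gives the theorem.

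\textbf{Second route: double counting and Cauchy--Schwarz.} Summing the inequality $d(u)+d(v)\le n$ over all edges and regrouping the left side by vertices yields $\sum_{v\in V(G)}d(v)^{2}=\sum_{uv\in E(G)}\big(d(u)+d(v)\big)\le n\,|E(G)|$. On the other hand, Cauchy--Schwarz (equivalently, convexity of $x\mapsto x^{2}$) gives $\sum_{v}d(v)^{2}\ge\frac1n\big(\sum_{v}d(v)\big)^{2}=\frac{4|E(G)|^{2}}{n}$. Combining the two bounds and cancelling $|E(G)|$ (the case $|E(G)|=0$ being trivial) gives $|E(G)|\le n^{2}/4$, and since $|E(G)|$ is an integer, $|E(G)|\le\lfloor n^{2}/4\rfloor$.

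There is no genuine obstacle here --- Mantel's theorem is elementary --- so the only points requiring a little care are the floor bookkeeping (the parity identity in the inductive proof, or just integrality of $|E(G)|$ in the second proof) and, if the extremal graphs are wanted, the equality analysis: $d(u)+d(v)=n$ for every edge forces $G$ to be a complete bipartite graph, and equality in Cauchy--Schwarz forces its two parts to differ in size by at most one, so the maximum $\lfloor n^{2}/4\rfloor$ is attained precisely by $K_{\lceil n/2\rceil,\lfloor n/2\rfloor}$.
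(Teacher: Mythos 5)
Both of your arguments are correct, and there is nothing in the paper to compare them against: Mantel's theorem is quoted here as a classical result with a citation to the 1907 source, and the authors give no proof of it. Your key local observation --- that triangle-freeness forces $N(u)\cap N(v)=\emptyset$ and hence $d(u)+d(v)\le n$ for every edge $uv$ --- is exactly the standard engine, and you deploy it correctly in both routes: the induction step's edge count $d(u)+d(v)-1$ and the parity identity $\lfloor (n-2)^2/4\rfloor+(n-1)=\lfloor n^2/4\rfloor$ both check out, and the double-counting identity $\sum_{uv\in E}(d(u)+d(v))=\sum_v d(v)^2$ combined with Cauchy--Schwarz gives the bound with integrality supplying the floor. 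Incidentally, the second route is the one most closely aligned with the spirit of this paper, since the inequality $d(u)+d(v)\le n$ over edges is precisely the quantity controlled by Lemma 2.3 (the bound $q(G)\le\max\{d(u)+d(v):uv\in E(G)\}$) that the authors use to derive their $Q$-spectral strengthening; your equality analysis (complete bipartite, balanced) is also correct, though not needed for the stated implication.
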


Note that $m\leq \lfloor \frac{n^{2}}{4}\rfloor$ for every bipartite graph of order $n.$ In fact, Mantel's theorem indicates that every non-bipartite graph of order $n$
with $m>\lfloor \frac{n^{2}}{4}\rfloor$ contains a triangle. Erd\H{o}s \cite{Bondy2008} improved Mantel's theorem and showed that every non-bipartite triangle-free graph of order $n$ and size $m$ satisfies $m\leq\frac{(n-1)^{2}}{4}+1.$

Nikiforov \cite{V5} formally posed a spectral version of Tur\'{a}n-type problem as follows.

\begin{problem}[\!\cite{V5}]\label{p1}
What is the maximum spectral radius of an $H$-free graph of order $n$ or size $m$?
\end{problem}

In the past decades, much attention has been paid to Problem \ref{p1} for some specific graphs $H$ with cycles, see for example, $C_4$ \cite{ZW}, $C_6$ \cite{ZL},
$C_{2k+2}$ \cite{CDM}, $K_{2, r+1}$ \cite{ZLS}, edge-disjoint cycles \cite{LZZ}, friendship graphs \cite{CFTZ}, odd wheels \cite{CS}, intersecting odd cycles \cite{LP}, intersecting cliques \cite{DKL}.

Let $SK_{\lfloor\frac{n-1}{2}\rfloor,\lceil\frac{n-1}{2}\rceil})$ be the graph obtained from $K_{\lfloor\frac{n-1}{2}\rfloor,\lceil\frac{n-1}{2}\rceil}$
by subdividing an edge. Focusing on Problem \ref{p1}, Lin, Ning and Wu \cite{Lin2021} proved a spectral version of Mantel's theorem as follows.

\begin{theorem}[\!\cite{Lin2021}]\label{th1}
Let $G$ be a non-bipartite graph of order $n$. If
\begin{eqnarray*}
\rho(G)\geq\rho(SK_{\lfloor\frac{n-1}{2}\rfloor,\lceil\frac{n-1}{2}\rceil}),
\end{eqnarray*}
then $G$ contains a triangle unless $G\cong SK_{\lfloor\frac{n-1}{2}\rfloor,\lceil\frac{n-1}{2}\rceil}.$
\end{theorem}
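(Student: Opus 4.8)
\medskip
\noindent\textbf{Proof strategy.} I would prove the contrapositive: if $G$ is a non-bipartite, triangle-free graph of order $n$ with $\rho(G)\ge\rho(H)$, where $H:=SK_{\lfloor\frac{n-1}{2}\rfloor,\lceil\frac{n-1}{2}\rceil}$, then $G\cong H$. One first reduces to the case that $G$ is connected: if $G$ is disconnected, let $G_{0}$ be a component with $\rho(G_{0})=\rho(G)$. If $G_{0}$ is bipartite, then since a non-bipartite triangle-free graph needs at least five vertices, $G_{0}$ has order at most $n-5$, so $\rho(G_{0})\le\sqrt{\lfloor(n-5)^{2}/4\rfloor}<\rho(H)$, a contradiction. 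If $G_{0}$ is non-bipartite, then $G_{0}$ is triangle-free of some order $N\le n$, and invoking the statement in order $N$ (induction on $n$, using that $N\mapsto\rho(SK_{\lfloor\frac{N-1}{2}\rfloor,\lceil\frac{N-1}{2}\rceil})$ is increasing) produces a triangle in $G_{0}$ unless $N=n$; the small orders serving as base cases are handled directly.

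\medskip
\noindent\textbf{A walk-counting identity and reduction to odd girth $5$.} Assume $G$ connected, let $\mathbf{x}$ be the Perron vector of $A(G)$, set $\rho:=\rho(G)$, and pick $u$ with $x_{u}=\max_{v}x_{v}=1$. Write $N:=N(u)$, which is independent since $G$ is triangle-free, put $t:=d(u)=|N|$, and let $M$ be the second neighbourhood of $u$. Counting walks of length $2$ from $u$ gives
\begin{equation*}
\rho^{2}\;=\;\rho^{2}x_{u}\;=\;t+\sum_{w\in M}\big|N(u)\cap N(w)\big|\,x_{w}\;\le\;t+e(N,M),
\end{equation*}
and since $N(v)\cap N=\varnothing$ for every $v\in N$ one has $e(N,M)=\sum_{v\in N}(d(v)-1)\le t|M|$, so $\rho^{2}\le t(n-t)$. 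The key point is that this identity ignores every vertex at distance at least $3$ from $u$. If the odd girth of $G$ is at least $7$, then the distance levels $L_{1}=N$ and $L_{2}=M$ are both independent (an edge inside $L_{i}$ yields an odd cycle of length at most $2i+1$); hence non-bipartiteness forces at least two vertices at distance $\ge3$ from $u$ (a single such vertex would leave $G$ bipartite), so $|M|\le n-3-t$ and thus $\rho^{2}\le t(n-2-t)\le\lfloor(n-2)^{2}/4\rfloor<\rho(H)^{2}$, a contradiction. A refinement of the same estimate, combined with the eigen-equation $\rho x_{r}=\sum_{z\sim r}x_{z}$ at a vertex $r$ at distance $\ge3$ from $u$ (such an $r$ carries Perron weight that is invisible to the identity, forcing the weights near $u$ strictly below $1$), likewise rules out the case that the odd girth is $5$ while $u$ has eccentricity $\ge3$. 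Therefore $V(G)=\{u\}\cup N\cup M$, the odd girth equals $5$, and since $G$ is non-bipartite $M$ must contain an edge $w_{1}w_{2}$; choosing $v_{i}\in N$ with $w_{i}\sim v_{i}$ (necessarily $v_{1}\ne v_{2}$, else a triangle) exhibits a $5$-cycle $u\,v_{1}\,w_{1}\,w_{2}\,v_{2}\,u$.

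\medskip
\noindent\textbf{Pinning down the structure.} Now every vertex of $M$ has all of its neighbours in $N\cup M$. Exploiting the eigen-equations $\rho x_{v}=\sum_{z\sim v}x_{z}$ — a vertex of small degree or small neighbour-sum must have small $\mathbf{x}$-value, and any such vertex in $M$ feeds into the slack $\sum_{w\in M}|N(u)\cap N(w)|(1-x_{w})$ of the displayed inequality — I would establish, in order: (i) every $w\in M$ whose $\mathbf{x}$-value is not bounded away from $1$ is adjacent to all but boundedly many vertices of $N$; (ii) there is exactly one vertex $w_{0}\in M$ that fails this, and $d(w_{0})=2$; (iii) $G-w_{0}$ is bipartite and triangle-free of order $n-1$, and being near-extremal for the spectral radius it is $K_{p,q}$ with at most one edge deleted; (iv) $w_{0}$ is joined precisely to the two ends of that deleted edge. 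Combining (i)--(iv), $G$ is obtained from $K_{p,q}$ by subdividing one edge, with $p+q=n-1$. Finally, a short computation (with the $5$-dimensional equitable quotient matrix of $SK_{p,q}$, or a Rayleigh-quotient shift moving one vertex from the larger side to the smaller) shows that $\rho(SK_{p,q})$ is strictly largest when $\{p,q\}=\{\lfloor\frac{n-1}{2}\rfloor,\lceil\frac{n-1}{2}\rceil\}$; hence $\rho(G)\ge\rho(H)$ forces this split, and $G\cong H$.

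\medskip
\noindent\textbf{Main obstacle.} The skeleton above is fairly robust; the real work is quantitative. One must show that \emph{every} deviation from the structure of $H$ — odd girth at least $7$, a vertex far from the heaviest vertex, a missing edge in the complete-bipartite part, a replacement of $w_{0}$ by a vertex of larger degree, or an unbalanced split $\{p,q\}$ — lowers $\rho^{2}$ by strictly more than the tiny budget $\lfloor n^{2}/4\rfloor-\rho(H)^{2}=\Theta(1)$. This demands tight two-sided control: for instance the lower bound
\begin{equation*}
\rho(H)\;\ge\;\rho\!\left(K_{\lfloor\frac{n-1}{2}\rfloor,\lceil\frac{n-1}{2}\rceil}-e\right)\;\ge\;\sqrt{\Big\lfloor\tfrac{(n-1)^{2}}{4}\Big\rfloor}\;-\;\Big(\Big\lfloor\tfrac{(n-1)^{2}}{4}\Big\rfloor\Big)^{-1/2},
\end{equation*}
matched against upper bounds for $\rho(G)$ obtained by returning to $\rho^{2}=t+\sum_{w\in M}|N(u)\cap N(w)|\,x_{w}$ and estimating the Perron entries $x_{w}$. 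I expect the single hardest point to be the case in which $u$ has eccentricity exactly $3$ and $t=d(u)$ lies within $O(1)$ of $n/2$: there the crude bound only gives $\rho^{2}\le\lfloor(n-1)^{2}/4\rfloor$, which is \emph{not} smaller than $\rho(H)^{2}$, so the contradiction must be extracted from a careful analysis of how the far vertex and its neighbours redistribute the Perron vector.
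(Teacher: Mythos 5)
Theorem \ref{th1} is quoted from \cite{Lin2021}; the present paper gives no proof of it, so there is no internal argument to compare yours against --- the benchmark is the original Lin--Ning--Wu proof, which is itself a substantial piece of work. Your opening moves are sound and in the same spirit: the reduction to connected $G$, the identity $\rho^2=d(u)+\sum_{w\in M}|N(u)\cap N(w)|\,x_w$ at a vertex $u$ of maximum Perron entry (correct, since triangle-freeness annihilates the $N(u)$-terms of $(A^2\mathbf{x})_u$), and the elimination of odd girth at least $7$ via the independence of the first two distance levels are all fine.

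What you have written, however, is a roadmap rather than a proof, and the two places where the real difficulty lives are exactly the places left blank. First, the case of odd girth $5$ with a vertex at distance at least $3$ from $u$ is dispatched with ``likewise rules out,'' but no inequality is actually derived there; you concede in your closing paragraph that the crude bound only gives $\rho^2\le\lfloor(n-1)^2/4\rfloor$, which is \emph{not} below $\rho(SK_{\lfloor\frac{n-1}{2}\rfloor,\lceil\frac{n-1}{2}\rceil})^2$, and the promised refinement exploiting the Perron weight of the far vertex is never exhibited. Second, the entire ``pinning down the structure'' stage, items (i)--(iv), is a list of conclusions rather than arguments: (i) and (ii) require quantitative control of the Perron entries that is not supplied, and (iii) silently invokes a stability statement (``near-extremal for the spectral radius, hence $K_{p,q}$ minus at most one edge'') that is not an off-the-shelf fact and would need a proof of comparable length to the theorem itself. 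A smaller but genuine slip: the ``budget'' $\lfloor n^2/4\rfloor-\rho(H)^2$ is $\Theta(n)$, not $\Theta(1)$, since $\rho(H)^2$ is close to $\lfloor(n-1)^2/4\rfloor+1$; the $\Theta(1)$ margin only materializes after the structural work you have not yet done. In short: a plausible skeleton consistent with how such theorems are proved, but the load-bearing quantitative steps are missing, so this does not yet constitute a proof.
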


\begin{theorem}[\!\cite{Lin2021}]\label{th2}
Let $G$ be a non-bipartite graph of size $m$. If $\rho(G)\geq \sqrt{m-1}$, then $G$ contains a triangle unless $G\cong C_{5}.$
\end{theorem}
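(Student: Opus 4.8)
I would argue the contrapositive, keeping track of the boundary case: if $G$ is non-bipartite and triangle-free, then $\rho(G)\le\sqrt{m-1}$, with equality only when $G\cong C_{5}$ (and indeed $\rho(C_{5})=2=\sqrt{5-1}$). Fix a unit Perron eigenvector $x$ of $A(G)$, so that $\rho(G)^{2}=\|A(G)x\|_{2}^{2}=\sum_{u}\bigl(\sum_{v\sim u}x_{v}\bigr)^{2}$. Applying Cauchy--Schwarz termwise and then interchanging the order of summation gives
\begin{equation*}
\rho(G)^{2}\;\le\;\sum_{u}d(u)\sum_{v\sim u}x_{v}^{2}\;=\;\sum_{v}x_{v}^{2}\sum_{u\sim v}d(u).
\end{equation*}
The point is that, since $G$ is triangle-free, $N(v)$ is independent, so $\sum_{u\sim v}d(u)$ equals the number of edges meeting $N(v)$, that is $d(v)+e(N(v),R_{v})=m-e(R_{v})$, where $R_{v}=V(G)\setminus(N(v)\cup\{v\})$. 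Moreover $e(R_{v})\ge1$ for every $v$: if $R_{v}$ were independent, then $(N(v),\{v\}\cup R_{v})$ would be a bipartition of $G$ (no edge lies inside $N(v)$ by triangle-freeness, inside $R_{v}$ by assumption, or between $v$ and $R_{v}$ by definition), contradicting non-bipartiteness. Hence $\rho(G)^{2}\le(m-1)\sum_{v}x_{v}^{2}=m-1$.

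For the equality case, suppose $\rho(G)^{2}=m-1$. First I would note that a disconnected non-bipartite triangle-free graph satisfies the inequality strictly (a non-bipartite component alone already carries at least $5$ edges, and the remaining edges supply the slack), so $G$ must be connected and $x>0$. Equality then forces the Cauchy--Schwarz step to be tight at every vertex, i.e.\ $x$ is constant on each $N(u)$; equivalently $x_{v}=x_{v'}$ whenever $v,v'$ have a common neighbor. Since $G$ is connected and non-bipartite, any two vertices are joined by a walk of even length, so propagating this relation along such a walk shows $x$ is constant on $V(G)$. Then $\rho(G)\,x_{u}=\sum_{v\sim u}x_{v}=d(u)\,x_{u}$ makes $G$ $d$-regular with $d=\rho(G)$, and combining $d^{2}=m-1$ with $2m=nd$ gives $n=2d+2/d$; integrality forces $d\in\{1,2\}$, and since a connected $1$-regular graph has only two vertices, $d=2$ and $n=5$, i.e.\ $G\cong C_{5}$.

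The heart of the matter is the pair of structural facts driving the first paragraph: the identity $\sum_{u\sim v}d(u)=m-e(R_{v})$, which holds precisely because $G$ is triangle-free, and the bound $e(R_{v})\ge1$, which is exactly where non-bipartiteness enters and which is what upgrades the classical Nosal bound $\rho(G)\le\sqrt{m}$ to $\rho(G)\le\sqrt{m-1}$. In the equality analysis I expect the only mildly delicate point to be the deduction that ``$x$ constant on every neighborhood'' together with non-bipartiteness forces $x$ to be globally constant; the reduction to the connected case and the elimination of the spurious solution $(n,d)=(4,1)$ of the quadratic are routine.
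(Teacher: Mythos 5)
The paper does not actually prove this statement: Theorem \ref{th2} is quoted from Lin, Ning and Wu \cite{Lin2021} and used as motivation only, so there is no in-paper proof to compare against. Judged on its own, your argument is correct and complete. The chain $\rho^{2}=\sum_{u}\bigl(\sum_{v\sim u}x_{v}\bigr)^{2}\le\sum_{v}x_{v}^{2}\sum_{u\sim v}d(u)$ is valid for any unit eigenvector; the identity $\sum_{u\sim v}d(u)=m-e(R_{v})$ does use triangle-freeness exactly as you say (no edge inside $N(v)$ is double-counted), and the observation that $e(R_{v})\ge 1$ for every $v$ --- since otherwise $(N(v),\{v\}\cup R_{v})$ is a bipartition --- is precisely the one-edge improvement over Nosal's bound that the theorem encodes. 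Your equality analysis is also sound: the reduction to the connected case via ``the non-bipartite component has at least $5$ edges'' (an odd cycle of length at least $5$) together with Nosal for the remaining components, then termwise tightness of Cauchy--Schwarz forcing $x$ constant on every neighbourhood, propagation along even walks (which connect any two vertices in a connected non-bipartite graph), regularity, and the integrality argument $n=2d+2/d$ eliminating $d=1$. This is essentially the standard refinement of Nosal's walk-counting proof, and it is in the same spirit as the arguments the present paper does carry out for the $Q$-index (compare the use of $e(T)=1$ in Case 3 of the proof of Theorem \ref{main2}); no gap to report.
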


Recently, Zhai and Shu \cite{Zhai2022} improved the above result and obtained a sharp spectral condition.

\begin{theorem}[\!\!\cite{Zhai2022}]\label{th3}
Let $G$ be a non-bipartite graph of size $m$. If $\rho(G)\geq\rho(SK_{2,\frac{m-1}{2}})$, then $G$ contains a triangle unless $G\cong SK_{2,\frac{m-1}{2}}$.
\end{theorem}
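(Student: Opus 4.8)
\medskip
\noindent\textbf{Proof proposal.} Write $t=\frac{m-1}{2}$ (so $m$ is odd; since $SK_{2,2}=C_{5}$, in fact $m\ge 5$) and put $\rho_{0}=\rho(SK_{2,\frac{m-1}{2}})$. The automorphism of $SK_{2,t}$ exchanging its two degree-$t$ vertices forces the Perron vector to be constant on those two vertices, constant on their two ``branch'' neighbours, and constant on the $t-1$ remaining common neighbours, so the eigenequation collapses to $\rho_{0}=\frac{1}{\rho_{0}-1}+\frac{2(t-1)}{\rho_{0}}$, whence $\rho_{0}^{2}=m-2+\frac{1}{\rho_{0}-1}$ and in particular $m-2<\rho_{0}^{2}<m-1$. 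I would argue the contrapositive: if $G$ is non-bipartite, triangle-free, of size $m$ and $G\not\cong SK_{2,\frac{m-1}{2}}$, then $\rho(G)<\rho_{0}$.

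First I reduce to $G$ connected. If $G$ is disconnected (no isolated vertices), the component $G_{1}$ attaining $\rho(G)$ is triangle-free with $e(G_{1})\le m-1$, so by Nosal's inequality $\rho(H)^{2}\le e(H)$ for triangle-free $H$ we get $\rho(G)^{2}\le m-2<\rho_{0}^{2}$ unless $e(G_{1})=m-1$, in which case the remaining edge forms a $K_{2}$ and $G_{1}$ must itself be non-bipartite (else $G$ is bipartite), so the connected estimate below applied to $G_{1}$ gives $\rho(G)^{2}=\rho(G_{1})^{2}\le e(G_{1})-1=m-2<\rho_{0}^{2}$. So let $G$ be connected with Perron vector $\mathbf{x}$, let $u$ satisfy $x_{u}=\max_{v}x_{v}$, and normalise $x_{u}=1$. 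Put $N_{1}=N(u)$, $N_{2}=N_{2}(u)$, $N_{\ge 3}=V(G)\setminus(\{u\}\cup N_{1}\cup N_{2})$; triangle-freeness gives $e(N_{1})=0$ and $e(N_{1},N_{\ge 3})=0$. Expanding $\rho^{2}=\rho^{2}x_{u}=\sum_{v\sim u}\sum_{w\sim v}x_{w}$ and splitting off the terms with $w=u$ (all remaining $w$ then lying in $N_{2}$) yields the identity
\[
\rho^{2}=d(u)+\sum_{w\in N_{2}}c(u,w)\,x_{w},\qquad c(u,w):=|N(u)\cap N(w)|.
\]
Since $\sum_{w\in N_{2}}c(u,w)=e(N_{1},N_{2})$ and $m=d(u)+e(N_{1},N_{2})+R$ with $R:=e(N_{2})+e(N_{2},N_{\ge 3})+e(N_{\ge 3})$, we obtain $\rho^{2}\le m-R$. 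Connectedness and non-bipartiteness force $R\ge 1$ (otherwise $(\{u\}\cup N_{2},\,N_{1})$ is a bipartition). If $R\ge 2$ then $\rho^{2}\le m-2<\rho_{0}^{2}$ and we are done; if $R=1$ then necessarily $e(N_{2})=1$ and $N_{\ge 3}=\emptyset$, since a lone edge touching $N_{\ge 3}$ would either disconnect $G$ or keep it bipartite.

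It remains to treat the rigid case $R=1$: $V(G)=\{u\}\cup N_{1}\cup N_{2}$, the only edge inside $N_{2}$ is some $ab$, every other edge joins $N_{1}$ to $\{u\}\cup N_{2}$, and $a,b$ have no common neighbour in $N_{1}$. Here $c(u,w)=d_{N_{1}}(w)$ for $w\in N_{2}$, and the hypothesis $\rho(G)\ge\rho_{0}$ gives the quantitative handle $\delta:=\sum_{w\in N_{2}}d_{N_{1}}(w)(1-x_{w})=m-1-\rho(G)^{2}<1$, i.e. the weighted average (weights $d_{N_{1}}(w)$) of the $N_{2}$-entries $x_{w}$ is extremely close to $1$. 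I would then feed the eigenequations $\rho x_{w}=\sum_{n\in N_{1}\cap N(w)}x_{n}$ ($+x_{\text{mate}}$ if $w\in\{a,b\}$), the bound $x_{n}\le 1$, and $\rho=\sum_{n\in N_{1}}x_{n}$ back into $\delta<1$ to pin down the structure: almost all of the $e(N_{1},N_{2})$ edges must emanate from one heavy vertex of $N_{2}$ adjacent to nearly all of $N_{1}$; combined with $d(u)=t$ being forced and with $a,b$ having disjoint $N_{1}$-neighbourhoods, this leaves only $G\cong SK_{2,\frac{m-1}{2}}$, contradicting the assumption. Equivalently, one may run an edge-relocation argument: repeatedly deleting an edge of $N_{1}\times N_{2}$ and re-inserting it at the Perron-heavier endpoint, keeping the graph simple, triangle-free and non-bipartite, strictly increases $\rho$ and terminates at $SK_{2,\frac{m-1}{2}}$, so $\rho(G)<\rho_{0}$.

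The serious obstacle is this final step inside the $R=1$ family: one must upgrade the single scalar inequality $\delta<1$ to full rigidity ($|N_{2}|=2$, the $(t-1,1)$-split of the $N_{1}$-degrees of $a$ and $b$, and $d(u)=t$), or equivalently choose the relocations so that they never create a triangle or a parallel edge --- the neighbourhood of the low-degree branch vertex being the delicate spot, since shifting an edge onto $a$ can produce the triangle $abn$ --- while each move strictly increases $\rho$. Everything else --- the value of $\rho_{0}$, the reduction to connected graphs, and the dichotomy on $R$ --- is routine once the identity $\rho^{2}=d(u)+\sum_{w\in N_{2}}c(u,w)x_{w}$ is in hand.
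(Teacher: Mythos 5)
This statement is quoted from Zhai and Shu \cite{Zhai2022}; the present paper does not prove it, so there is no internal proof to compare against. Judged on its own terms, your setup is the standard (and correct) one for this problem: the reduction to connected $G$, the identity $\rho^{2}=d(u)+\sum_{w\in N_{2}}|N(u)\cap N(w)|\,x_{w}$ obtained by expanding $\rho^{2}x_{u}$ around a maximum-entry vertex, the edge count $m=d(u)+e(N_{1},N_{2})+R$, and the observation that connectivity and non-bipartiteness force $R\geq 1$, with $R\geq 2$ immediately giving $\rho^{2}\leq m-2<\rho_{0}^{2}$. The computation $\rho_{0}^{2}=m-2+\frac{1}{\rho_{0}-1}$ is also right (though note that for $m=5$ one has $SK_{2,2}\cong C_{5}$ and $\rho_{0}^{2}=m-1$ exactly, so the strict inequality $\rho_{0}^{2}<m-1$ needs $m\geq 7$; the case $m=5$ must be disposed of separately, which is easy).

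The genuine gap is the one you yourself flag: the entire difficulty of the theorem is concentrated in the case $R=1$, and there you only state two candidate strategies without executing either. The scalar bound $\delta=m-1-\rho^{2}<1$ is far from rigidity: to conclude you must show that every $R=1$ graph other than $SK_{2,\frac{m-1}{2}}$ satisfies $\delta>1-\frac{1}{\rho_{0}-1}$ (not $\delta\geq 1$, which is what your phrasing suggests), and this requires effective lower bounds on the deficiencies $1-x_{w}$ for $w\in N_{2}$, control of $d(u)$ (which is not ``forced'' to equal $\frac{m-1}{2}$ a priori), and a separate treatment of the two endpoints $a,b$ of the unique $N_{2}$-edge, whose $N_{1}$-neighbourhoods must be shown to split as $(t-1,1)$. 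The edge-relocation alternative has the same unresolved core: each move must preserve triangle-freeness and non-bipartiteness and strictly increase $\rho$, and precisely the moves toward $a$ or $b$ threaten to create triangles, so termination at $SK_{2,\frac{m-1}{2}}$ is asserted rather than proved. In Zhai--Shu's paper this rigidity analysis occupies most of the argument, so as it stands your proposal establishes only the easy half ($R\neq 1$) of the theorem.
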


\input{f1.TpX}\label{1}

Correspondingly, Freitas, Nikiforov and Patuzzi \cite{DF2013} proposed a $Q$-spectral version of Tur\'{a}n-type problem.

\begin{problem}\label{p2}
What is the maximum $Q$-index among all the $H$-free graphs with given order $n$ or size $m$?
\end{problem}

Problem \ref{p2} has been investigated for some given graphs $H$ related to cycles, see for example, $K_{s,t}$ \cite{DF2016},
even cycles \cite{Nikiforov2015}, odd cycles \cite{Yuan2014},
friendship graphs \cite{Zhao2021}, intersecting odd cycles\cite{Chen20}.

Inspired by Theorems \ref{th1}-\ref{th3} and the above results of $Q$-index, we determine the maximum $Q$-index of non-bipartite triangle-free graphs of order $n$ or size $m$. It is observed that the extremal graphs for $Q$-index in Theorems \ref{main1} and \ref{main2} are very different from those for the spectral radius in Theorems \ref{th1}-\ref{th3}.

Let $H$ be a graph with vertex set $V(H)=\{v_{1},v_{2},\ldots,v_{k}\}$ and $r=(n_{1},n_{2},\ldots,n_{k})$ be a vector of positive integers.
A blow-up of $H$, denoted by $H\circ r$, is the graph obtained from $H$ by replacing each vertex $v_{i}$
with an independent set $V_{i}$ of $n_{i}$ vertices and joining each vertex in $V_{i}$ with each vertex in $V_{j}$ for $v_{i}v_{j}\in E(H)$.
The graph $C_{5}\circ(n-4,1,1,1,1)$ (see Fig. \ref{1}) is a blow-up of $C_{5}$, where $V(C_{5})=\{v_{1},v_{2},v_{3},v_{4},v_{5}\}$ and $r=(n-4,1,1,1,1)$.
Moreover, $q(C_{5}\circ(n-4,1,1,1,1))$ is the largest root of $$x^{3}-(n+2)x^{2}+(3n-2)x-4=0.$$ Next we present the first main result.

\begin{theorem}\label{main1}
Let $G$ be a non-bipartite graph of order $n$. If
$$q(G)\geq q(C_{5}\circ(n-4,1,1,1,1)),$$
then $G$ contains a triangle unless $G\cong C_{5}\circ(n-4,1,1,1,1)$.
\end{theorem}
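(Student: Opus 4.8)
The plan is to show directly that every non-bipartite triangle-free graph $G$ of order $n$ with $q(G)\ge q(G^{*})$, where $G^{*}:=C_{5}\circ(n-4,1,1,1,1)$, is isomorphic to $G^{*}$. A computation with the equitable partition $\{V_{1}\},\{v_{2},v_{5}\},\{v_{3},v_{4}\}$ shows that $q(G^{*})$ is the largest root of $x^{3}-(n+2)x^{2}+(3n-2)x-4$, and evaluating this polynomial at $n-2$ and at $n-1$ gives $n-2<q(G^{*})<n-1$ for all $n\ge 6$ (the cases $n\le 5$ are trivial, since $C_{5}$ is the only non-bipartite triangle-free graph on at most five vertices). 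So assume $n\ge 6$. First I would reduce to $G$ connected by induction on $n$: if $G$ is disconnected and a component $G_{i}$ attains $q(G)=q(G_{i})$, then either $G_{i}$ is a non-bipartite triangle-free graph on $n_{i}<n$ vertices, whence by the inductive hypothesis $q(G_{i})\le q(C_{5}\circ(n_{i}-4,1,1,1,1))<q(G^{*})$ (the $Q$-index of this family is strictly increasing in the order, since the smaller member, with isolated vertices added, is a proper subgraph of the larger connected one), or $G_{i}$ is bipartite, in which case $Q(G_{i})$ is similar to the Laplacian of $G_{i}$, so $q(G_{i})\le n_{i}\le n-5<q(G^{*})$ because $G$ has a non-bipartite component with at least five vertices. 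Either way $q(G)<q(G^{*})$, a contradiction. Since adding an edge strictly increases the $Q$-index of a connected graph and cannot destroy non-bipartiteness, I may also assume $G$ is edge-maximal triangle-free; in particular every two non-adjacent vertices have a common neighbour, so $G$ has diameter at most $2$.

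Next I would upgrade ``triangle-free'' to ``odd girth exactly $5$''. If a shortest odd cycle $v_{1}\cdots v_{g}$ had $g\ge 7$, then $v_{1}$ and $v_{4}$ are non-adjacent (a chord would give a shorter odd cycle), so they have a common neighbour $w$; a short case check shows $w$ cannot lie on the cycle (each position yields either a triangle or a shorter odd cycle), and then $v_{1}wv_{4}v_{3}v_{2}v_{1}$ is a $C_{5}$, contradicting $g\ge 7$. Now for the structural core: for any edge $uv$ of a non-bipartite triangle-free graph, $N(u)$ and $N(v)$ are disjoint and cannot cover $V(G)$ (otherwise $\{u\}\cup(N(v)\setminus\{u\})$ and $\{v\}\cup(N(u)\setminus\{v\})$ would be a bipartition), so $d(u)+d(v)\le n-1$; with $q(G)\le\max_{uv\in E(G)}\{d(u)+d(v)\}$ this gives $q(G)\le n-1$. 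Since $q(G)>n-2$, there is an edge $xy$ with $d(x)+d(y)=n-1$. Put $U=N(x)\setminus\{y\}$, $W=N(y)\setminus\{x\}$, and let $z$ be the unique vertex of $V(G)\setminus(N(x)\cup N(y))$. Triangle-freeness forces $U,W$ to be independent, disjoint, and non-adjacent to $y$, $x$ respectively; non-bipartiteness forces $z$ to have a neighbour in $U$ and a neighbour in $W$; and edge-maximality forces every non-adjacent pair $p\in U$, $w\in W$ to have a common neighbour, which by the previous constraints can only be $z$. Together these show that the bipartite graph between $U$ and $W$ is the complete bipartite graph minus the complete bipartite graph between $N(z)\cap U$ and $N(z)\cap W$, so $G$ is completely determined by the integers $|U|,|W|,|N(z)\cap U|,|N(z)\cap W|$, subject to $|U|+|W|=n-3$ and $N(z)\cap U$, $N(z)\cap W$ non-empty.

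It remains to show that within this finite family of candidates the $Q$-index is maximized uniquely by $G^{*}$, the member with $|W|=1$, $N(z)\cap U=U$ and $N(z)\cap W=W$ (equivalently: $x$ and $z$ are two vertices of degree $n-3$ joined by a path of length $3$). The family carries a natural equitable partition (into $\{x\},\{y\},\{z\}$ and the sets $U\setminus N(z)$, $U\cap N(z)$, $W\setminus N(z)$, $W\cap N(z)$), so the members' $Q$-indices are the largest roots of small quotient matrices. To compare them I would use the Perron vector $\mathbf{x}$ of $Q(G)$, which is constant on the parts, and perform local switches: moving one vertex from a part to an adjacent part, one checks via the Rayleigh quotient $\mathbf{x}^{\top}Q(G')\mathbf{x}/\|\mathbf{x}\|^{2}$ together with the eigenvalue equations $q\,x_{t}=d(t)x_{t}+\sum_{s\sim t}x_{s}$ (and the resulting comparison of the Perron weights at $x$, $y$, $z$) that each switch carrying a non-extremal member toward $G^{*}$ strictly increases the $Q$-index. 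Hence $G^{*}$ is the unique member of the family with maximum $Q$-index; since $G$ belongs to the family and $q(G)\ge q(G^{*})$, we conclude $G\cong G^{*}$.

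The step I expect to be the main obstacle is the last one: to make each switching inequality rigorous one must control the Perron-vector entries on the low-degree parts precisely enough to fix the sign of every switch, and organize the three-parameter family so that a bounded number of switch types reduce everything to $G^{*}$ (the special case $|W|=1$ with only $|N(z)\cap U|$ varying is already instructive: there one shows that the weight at the larger-degree hub exceeds that at the smaller one, so a vertex of $U$ can always be shifted toward the larger hub). Secondary delicate points are the exact case analysis in deriving the skeleton (the possibilities $N(z)\cap U=U$ versus $U\setminus N(z)\neq\emptyset$, and the same for $W$) and making both the induction base and the monotonicity of $q$ on the family $C_{5}\circ(k,1,1,1,1)$ in $k$ fully rigorous.
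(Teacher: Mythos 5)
Your structural reduction is essentially the paper's own (Lemma \ref{lem10}): from $q(G)\le\max\{d(u)+d(v):uv\in E(G)\}$, $d(u)+d(v)\le n-1$ for every edge of a non-bipartite triangle-free graph, and $q(G)>n-2$, you locate an edge $xy$ with $d(x)+d(y)=n-1$, and the analysis of $U$, $W$ and the exceptional vertex $z$ correctly forces $G\cong C_{5}\circ(n_{1},n_{2},n_{3},n_{4},1)$. That part is sound (your edge-maximality reduction and the induction for disconnected $G$ can both be made rigorous, though the paper's extremal-graph formulation makes them one-liners). The genuine gap is the step you yourself flag: showing that within the four-parameter family $C_{5}\circ(n_{1},n_{2},n_{3},n_{4},1)$ the $Q$-index is uniquely maximized at $(n-4,1,1,1,1)$. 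Your proposed ``local switches'' move a vertex between parts, which changes its entire neighbourhood; the sign of the resulting change in $\sum_{uv\in E}(x_{u}+x_{v})^{2}$ depends on comparing Perron weights summed over two different neighbourhoods, and nothing in your sketch pins these down. This is not a routine detail --- it is the actual content of the second half of the proof --- and as written the argument does not close.

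For comparison, the paper avoids the switching difficulty with a different tool: the bound $q(G)\le\max_{u}\{d(u)+m(u)\}$, where $m(u)$ is the average degree of the neighbours of $u$ (Lemma \ref{lem4}). Splitting into cases by where the maximizing vertex lies, the inequality $d(u^{\ast})+m(u^{\ast})>n-1-\tfrac{1}{n}$ forces all but two of the $n_{i}$ to equal $1$ by elementary arithmetic; the only Perron-vector step needed is a single edge rotation toward whichever of two symmetric vertices has the larger entry (so no entry estimates are required), and the residual family $C_{5}\circ(n_{1},n_{2},1,1,1)$ is finished by an explicit comparison of characteristic polynomials of equitable quotient matrices, $f(n_{1},n_{2},x)-f(n_{1}+1,n_{2}-1,x)=(2n_{1}-n+4)(x-2)(x^{2}-(n-1)x+2)$. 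If you want to complete your version, adopting the $d(u)+m(u)$ bound (or the polynomial comparison on the reduced family) is the missing ingredient.
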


If $m>\lfloor\frac{n^{2}}{4}\rfloor$, then $m>\frac{n^{2}-1}{4}$. By a well-known inequality $q(G)\geq\frac{4m}{n}$, we have
$q(G)\geq\frac{4m}{n}>\frac{n^{2}-1}{n}=n-\frac{1}{n}>q(C_{5}\circ(n-4,1,1,1,1))$. This implies that Theorem \ref{main1} is stronger than Mantel's theorem.

Denote by $C_{5}\bullet K_{1,m-5}$ the graph by identifying a vertex of $C_{5}$ and the central vertex of $K_{1,m-5}$ (see Fig. \ref{2}).
Furthermore, $q(C_{5}\bullet K_{1,m-5})$ is the largest root of $$x^{4}-(m+3)x^{3}+(5m-5)x^{2}+(8-5m)x+4=0.$$

\begin{theorem}\label{main2}
Let $G$ be a non-bipartite graph of size $m$. If
\begin{eqnarray*}
q(G)\geq q(C_{5}\bullet K_{1,m-5}),
\end{eqnarray*}
then $G$ contains a triangle unless $G\cong C_{5}\bullet K_{1,m-5}.$
\end{theorem}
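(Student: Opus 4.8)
The plan is to let $G$ be a non-bipartite triangle-free graph of size $m$ whose $Q$-index $q:=q(G)$ is as large as possible, and to show $G\cong G^{*}$, where $G^{*}:=C_{5}\bullet K_{1,m-5}$. One may assume $m\ge 6$, since a non-bipartite triangle-free graph with exactly $5$ edges equals $C_{5}=G^{*}$. Three preliminary facts will be used: (i) the crude bound $q(H)\le\max_{xy\in E(H)}\{d(x)+d(y)\}\le |E(H)|+1$ for every graph $H$; (ii) $q(G^{*})>m-2$, since the quartic $p(x)=x^{4}-(m+3)x^{3}+(5m-5)x^{2}+(8-5m)x+4$ satisfies $p(m-2)=-2(m-4)<0$ while $p(x)\to+\infty$ (in fact also $q(G^{*})<m-1$, as $p(m-1)=m^{3}-8m^{2}+16m-5>0$ for $m\ge 6$); and (iii) every non-bipartite triangle-free graph $H$ of size $m$ has $q(H)<m$, for otherwise some edge $xy$ of $H$ has $d(x)+d(y)\ge m$, leaving at most one edge outside the double star centred at $\{x,y\}$, which makes $H$ bipartite or creates a triangle. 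Finally I would reduce to $G$ connected: if $G$ were disconnected, a component $G_{0}$ with $q(G_{0})=q(G)$ is either bipartite, which is impossible since then another component is non-bipartite and hence has $\ge 5$ edges, giving $q(G)\le|E(G_{0})|+1\le m-4<q(G^{*})$; or non-bipartite with $|E(G_{0})|<m$, which is also impossible since attaching $m-|E(G_{0})|$ pendant edges to $G_{0}$ strictly increases the $Q$-index. So $G$ is connected and $q\ge q(G^{*})>m-2$.

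Next, let $x$ denote the Perron vector of $Q(G)$ normalised so that $x_{u}=\max_{v}x_{v}=1$, and set $N=N(u)$, $W=V(G)\setminus(\{u\}\cup N)$ and $t=m-d(u)$; triangle-freeness makes $N$ independent. The heart of the proof is the claim $d(u)=m-3$, i.e.\ $t=3$. By (iii), $d(u)\le q-1<m-1$, so $t\ge 2$; and $t=2$ is impossible, because an odd cycle (of length $\ge 5$, by triangle-freeness) either passes through $u$ and then uses at least $3$ edges not incident to $u$, or avoids $u$ and uses at least $5$ such edges, whereas only $t=2$ of them exist, contradicting non-bipartiteness. Hence $t\ge 3$. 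For the reverse inequality I would sum the eigenvalue equation $(q-d(v))x_{v}=\sum_{w\sim v}x_{w}$ over $v\in N$; using $q-d(u)=\sum_{w\sim u}x_{w}$, the identity $d(v)=1+|N(v)\cap W|$ for $v\in N$, and the independence of $N$ (so that a length-two walk starting in $N$ returns to $u$ or reaches $W$), one obtains $q\bigl(q-d(u)\bigr)\le q+2\,e(N,W)\le q+2t$, hence $(q-1)\bigl(q-d(u)\bigr)\le m+t$. Combined with $q-1>m-3$ and $q-d(u)=(q-m)+t>t-2$, this yields $t(m-4)<3(m-2)$, so $t\le 3$ whenever $m\ge 10$; and for $6\le m\le 9$ the same inequality together with the elementary bound $d(u)\ge q/2>(m-2)/2$ (from $\sum_{w\sim u}x_{w}\le d(u)$) leaves only $t\in\{3,4\}$.

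It remains to analyse $t\in\{3,4\}$. In both cases the $t$ edges not incident to $u$ lie in $G[N\cup W]$, which has no edge inside $N$, so by triangle-freeness and non-bipartiteness $G$ has an odd cycle through $u$ of length at least $5$; since any such cycle of length $\ge 7$ would use $\ge 5$ of those edges, it is a $5$-cycle $C=uacdbu$ with $a,b\in N$, $c,d\in W$, using precisely the three edges $ac,cd,db$. If $t=3$ then $W=\{c,d\}$, the remaining $m-5$ neighbours of $u$ are pendant, and $G\cong G^{*}$. If $t=4$ (which, by the above, forces $m\in\{7,8,9\}$), the fourth outside edge $e$ either (a) joins two vertices of $\{a,b,c,d\}$ and thereby creates a triangle, a contradiction; or (b) is a pendant-type edge having an endpoint in $W$ or a pendant neighbour of $u$, in which case every edge of $G$ has degree-sum at most $m-2$ and so $q(G)\le m-2<q(G^{*})$; or (c) is a pendant edge $af$ (or $bf$) at the cycle-vertex $a\in N$ adjacent to $u$, in which case replacing $af$ by $uf$ keeps $G$ non-bipartite, triangle-free and of size $m$, turns it into $G^{*}$, and strictly increases the $Q$-index by the standard rotation lemma (as $x_{u}\ge x_{a}$). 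Cases (b) and (c) contradict $q(G)\ge q(G^{*})$, so in every case $G\cong G^{*}$.

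The main difficulty is the middle part: turning the easy estimate $d(u)\ge m-O(1)$ into the exact value $d(u)=m-3$, and above all eliminating the borderline case $d(u)=m-4$. There a single stray edge can raise $\max_{xy\in E(G)}\{d(x)+d(y)\}$ all the way to $m-1$, so the crude spectral bound no longer settles the comparison with $q(G^{*})$ and one is forced to invoke the edge-rotation argument; the finitely many graphs that survive for $m\in\{7,8,9\}$ then have to be checked individually, either by the same rotation or by comparing their characteristic polynomials with $p(x)$.
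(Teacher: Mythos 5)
Your proposal is correct, but its central step runs along a genuinely different track from the paper's. The paper works at the vertex $u^{\ast}$ maximizing $d(u)+m(u)$, applies the bound $q(G)\le d(u^{\ast})+m(u^{\ast})$ together with $\sum_{v\in N(u^{\ast})}d(v)\le m$ (valid because $N(u^{\ast})$ is independent), and eliminates every degree $2\le d(u^{\ast})\le m-4$ in one stroke by convexity of $x+m/x$, landing directly in the case $d(u^{\ast})=m-3$; the values $m=6,7$ are then settled by enumerating the finitely many candidate graphs. You instead work at the vertex $u$ of maximum Perron entry and sum the eigenvalue equation of $Q(G)$ over $N(u)$; I checked that the identities $\sum_{v\in N}x_v=q-d(u)$, $d(v)=1+|N(v)\cap W|$ and $x_v\le x_u=1$ do give exactly your chain $q(q-d(u))\le q+2e(N,W)\le q+2t$, hence $(q-1)(q-d(u))\le m+t$ and $t(m-4)<3(m-2)$, so the estimate is sound. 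The price of your route is that it pins down only $t\in\{3,4\}$ when $7\le m\le 9$, forcing a residual $t=4$ analysis; your three subcases for the fourth outside edge are exhaustive (it creates a triangle, or leaves every edge with degree sum at most $m-2$, or is a pendant edge at a cycle vertex in $N(u)$ that the rotation lemma relocates to $u$), though the middle subcase deserves the explicit check that an edge inside $N\cup W$ cannot have both endpoints of degree $3$ without falling into the triangle subcase, which is what keeps the degree sums at $m-2$ even for $m=7$. What each approach buys: the paper's convexity argument is shorter and uniform for $m\ge 8$, while your eigenvector computation is self-contained and replaces the paper's ad hoc enumeration of the seven-edge graphs by a structural argument; once the hub degree equals $m-3$, the two endgames coincide.
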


\input{f2.TpX}

In the following sections, we first prepare the grounds for the proofs of Theorems \ref{main1} and \ref{main2}, and then give their proofs.

\section{Proof of Theorem \ref{main1}}

We in this section will present some important results that will be used in our arguments.

Let $M$ be a real $n\times n$ matrix, and let $V(G)=\{1,2,\ldots,n\}$. Given a partition $\Pi: V(G)=V_{1}\cup V_{2}\cup\cdots\cup V_{k}$, $M$ can be correspondingly partitioned as
$$
M=
\begin{pmatrix}
M_{1,1}&M_{1,2}&\cdots&M_{1,k}\\
M_{2,1}&M_{2,2}&\cdots&M_{2,k}\\
\vdots&\vdots&\ddots&\vdots\\
M_{k,1}&M_{k,2}&\cdots&M_{k,k}
\end{pmatrix}
.$$
The quotient matrix of $M$ with respect to $\Pi$ is defined as the $k\times k$ matrix $B_{\Pi} = (b_{i,j})^{k}_{i,j=1}$, where $b_{i,j}$ is the average value of all row sums of $M_{i,j}$. The partition $\Pi$ is called equitable if each block $M_{i,j}$ of $M$ has constant row sum $b_{i,j}$. Also, we say that the quotient matrix $B_{\Pi}$ is equitable if $\Pi$ is an equitable partition of $M$.

\begin{lemma}[\!\cite{Brouwer2011,Godsil2001}]\label{lem2}
 Let $M$ be a real symmetric matrix and $\lambda(M)$ be its largest eigenvalue.
 Let $B_{\Pi}$ be an equitable quotient matrix of $M$. Then the eigenvalues of $B_{\Pi}$ are also eigenvalues of $M$.
 Furthermore, if $M$ is nonnegative and irreducible, then $\lambda(M)=\lambda(B_{\Pi})$.
\end{lemma}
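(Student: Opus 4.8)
The plan is to encode the equitable partition by its characteristic matrix and reduce both assertions to a single intertwining relation. Concretely, I would let $S$ be the $n\times k$ matrix whose $j$-th column is the indicator vector of $V_j$, so that $S_{u,j}=1$ exactly when $u\in V_j$. The entry of $MS$ in position $(u,j)$ is $\sum_{w\in V_j}M_{u,w}$, which is precisely the row sum of the block $M_{i,j}$ along row $u$ when $u\in V_i$; equitability says this equals $b_{i,j}$, i.e. the $(u,j)$ entry of $SB_{\Pi}$. Hence $MS=SB_{\Pi}$. Since the parts are nonempty and disjoint, the columns of $S$ are linearly independent, so $S$ has full column rank.

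For the first assertion I would take any eigenpair $(\mu,v)$ of $B_{\Pi}$ and form $Sv$. Then $M(Sv)=(MS)v=(SB_{\Pi})v=\mu(Sv)$, and $Sv\neq 0$ because $v\neq 0$ and $S$ has full column rank. Thus $Sv$ is an eigenvector of $M$ for $\mu$, so every eigenvalue of $B_{\Pi}$ is an eigenvalue of $M$. In particular, since $M$ is symmetric, every eigenvalue of $B_{\Pi}$ is real, which is what allows one to speak of the largest eigenvalue $\lambda(B_{\Pi})$ in the second assertion.

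For the second assertion I would invoke the symmetry of $M$ together with Perron--Frobenius. Let $\mathcal U$ denote the column space of $S$. The relation $MS=SB_{\Pi}$ shows $\mathcal U$ is $M$-invariant, and because $M$ is symmetric the orthogonal complement $\mathcal U^{\perp}$ is $M$-invariant as well. Now use that $M$ is nonnegative and irreducible: by Perron--Frobenius the largest eigenvalue $\lambda(M)$ equals the spectral radius of $M$, is simple, and admits a strictly positive eigenvector $x$. Decompose $x=x'+x''$ with $x'\in\mathcal U$ and $x''\in\mathcal U^{\perp}$; since both subspaces are $M$-invariant, applying $M$ and matching components gives $Mx'=\lambda(M)x'$ and $Mx''=\lambda(M)x''$. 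The all-ones vector $\mathbf 1=S\mathbf 1_k$ lies in $\mathcal U$, so $(x')^{\top}\mathbf 1=x^{\top}\mathbf 1>0$ forces $x'\neq 0$; then $x'$ is a $\lambda(M)$-eigenvector and, by simplicity, a scalar multiple of $x$. If $x''\neq 0$ it would likewise be a multiple of $x$, yet it lies in $\mathcal U^{\perp}$ while $x$ does not (as $x^{\top}\mathbf 1\neq 0$), a contradiction. Hence $x''=0$ and $x=Sv$ for some $v$.

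Finally I would descend the eigen-relation through $S$: from $MSv=\lambda(M)Sv$ and $MS=SB_{\Pi}$ we get $S\big(B_{\Pi}v-\lambda(M)v\big)=0$, and full column rank of $S$ yields $B_{\Pi}v=\lambda(M)v$, so $\lambda(M)$ is an eigenvalue of $B_{\Pi}$. Combined with the first assertion, which bounds every (real) eigenvalue of $B_{\Pi}$ above by $\lambda(M)$, this gives $\lambda(B_{\Pi})=\lambda(M)$. The main obstacle is precisely this second assertion: ensuring the Perron vector actually lies in $\mathcal U$, equivalently that it is constant on each part. This is the only place the extra hypotheses enter, and both are needed there: symmetry makes $\mathcal U^{\perp}$ invariant, and irreducibility makes the Perron eigenvalue simple and its eigenvector positive.
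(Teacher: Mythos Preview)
The paper does not give its own proof of this lemma; it is quoted from the cited references \cite{Brouwer2011,Godsil2001} and used as a black box. Your argument is correct and is essentially the standard proof one finds in those sources: the intertwining relation $MS=SB_{\Pi}$ for the characteristic matrix $S$ of the partition, lifting of eigenvectors via $S$ for the first claim, and for the second claim the observation that the Perron vector lies in the $M$-invariant column space of $S$ (using symmetry to get invariance of $\mathcal U^{\perp}$ and Perron--Frobenius for simplicity and positivity). There is nothing to compare against in the paper itself, and no gap in what you wrote.
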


\begin{lemma}[\!\cite{Hong2005}]\label{lem3}
Let $G$ be a connected graph and $X=(x_{v})_{v\in V (G)}$ be the Perron vector of $Q(G)$.
Assume that $u_{1}v\notin E(G)$ while $u_{2}v\in E(G)$. If $x_{u_{1}}\geq x_{u_{2}}$, then $q(G-u_{2}v+u_{1}v)>q(G)$.
\end{lemma}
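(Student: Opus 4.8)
The plan is to compare the $Q$-indices of $G$ and $G'=G-u_{2}v+u_{1}v$ through the Rayleigh quotient evaluated at the Perron vector $X$ of $Q(G)$. Since $G$ is connected, $X$ is a positive unit vector and $q(G)=X^{\top}Q(G)X$. The essential tool is the signless-Laplacian edge identity
$$X^{\top}Q(H)X=\sum_{ab\in E(H)}(x_{a}+x_{b})^{2},$$
valid for any graph $H$ on the vertex set of $G$; it follows by writing $X^{\top}Q(H)X=\sum_{a}d_{a}x_{a}^{2}+2\sum_{ab\in E(H)}x_{a}x_{b}$ and noting that $\sum_{a}d_{a}x_{a}^{2}=\sum_{ab\in E(H)}(x_{a}^{2}+x_{b}^{2})$, since each edge contributes to the degree of both of its endpoints.

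Applying this identity to both $G$ and $G'$ and cancelling the common edge-terms, only the deleted edge $u_{2}v$ and the inserted edge $u_{1}v$ survive, so
$$X^{\top}Q(G')X-X^{\top}Q(G)X=(x_{u_{1}}+x_{v})^{2}-(x_{u_{2}}+x_{v})^{2}=(x_{u_{1}}-x_{u_{2}})(x_{u_{1}}+x_{u_{2}}+2x_{v}).$$
Because $x_{u_{1}}\ge x_{u_{2}}$ and every entry of $X$ is positive, the right-hand side is nonnegative. Since $X$ is a unit vector, the Rayleigh principle gives $q(G')\ge X^{\top}Q(G')X\ge X^{\top}Q(G)X=q(G)$, which already settles the inequality in the weak sense, and strictly whenever $x_{u_{1}}>x_{u_{2}}$.

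The hard part is upgrading to a strict inequality in the boundary case $x_{u_{1}}=x_{u_{2}}$, where the chain above collapses to equalities. Here I would argue by contradiction: if $q(G')=q(G)$, then $X^{\top}Q(G')X=q(G')$, and since $X$ is a unit vector the variational characterization of the largest eigenvalue forces $X$ to be an eigenvector of $Q(G')$ for $q(G')$, that is, $Q(G')X=q(G')X=q(G)X$. Combined with $Q(G)X=q(G)X$, this yields $(Q(G')-Q(G))X=0$. But reading off the $u_{1}$-coordinate, using $d_{G'}(u_{1})=d_{G}(u_{1})+1$ and the fact that $v$ is the unique new neighbour of $u_{1}$, gives $\big((Q(G')-Q(G))X\big)_{u_{1}}=x_{u_{1}}+x_{v}>0$, a contradiction. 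Hence $q(G')>q(G)$. I emphasise that this coordinate computation needs only that $X$ be a positive unit vector, so it does not require $G'$ to be connected, which is precisely what makes the argument robust.
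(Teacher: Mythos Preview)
Your argument is correct. The paper does not prove this lemma at all: it is quoted from the literature (Hong and Zhang, \emph{Discrete Math.}~296 (2005)) and used as a black box, so there is nothing to compare against in the paper itself. What you have written is the standard proof, and each step is sound. The edge identity $X^{\top}Q(H)X=\sum_{ab\in E(H)}(x_a+x_b)^2$ is valid for any graph on the same vertex set, the Rayleigh comparison gives $q(G')\ge q(G)$, and your contradiction in the boundary case $x_{u_1}=x_{u_2}$ is clean: equality in Rayleigh forces $X$ to be an eigenvector of $Q(G')$, so $(Q(G')-Q(G))X=0$, but the $u_1$-row of $Q(G')-Q(G)$ picks up exactly $+1$ on the diagonal and $+1$ in column $v$, giving $x_{u_1}+x_v>0$. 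One small remark: you implicitly use that $u_1,u_2,v$ are pairwise distinct, which follows from the hypotheses ($u_2v\in E(G)$ forces $u_2\ne v$; $u_1v\notin E(G)$ and $u_2v\in E(G)$ force $u_1\ne u_2$; and $u_1\ne v$ is needed for $G'$ to be a simple graph), so it does no harm to state it.
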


The following two lemmas give upper bounds on the $Q$-index.

\begin{lemma}[\!\cite{CRS2007}]\label{lem5}
For every graph $G$,
\begin{eqnarray*}
q(G)\leq \max\{d(u)+d(v): uv\in E(G)\}.
\end{eqnarray*}
For a connected $G$, equality holds if and only if $G$ is regular graph or semi-regular bipartite.
\end{lemma}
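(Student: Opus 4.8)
The plan is to relate $Q(G)$ to the adjacency matrix of the line graph $L(G)$ and then invoke the classical Perron--Frobenius bound $\lambda_{\max}(A(H))\le\Delta(H)$. Concretely, I would let $B$ be the vertex--edge incidence matrix of $G$ (so the $(v,e)$-entry of $B$ is $1$ precisely when $v$ is an endpoint of $e$) and use the two identities $BB^{\top}=D(G)+A(G)=Q(G)$ and $B^{\top}B=2I+A(L(G))$, the latter because for two distinct edges the corresponding entry of $B^{\top}B$ counts common endpoints. Since $BB^{\top}$ and $B^{\top}B$ share the same nonzero eigenvalues and both are positive semidefinite, for any $G$ with at least one edge one gets $q(G)=\lambda_{\max}(B^{\top}B)=2+\lambda_{\max}(A(L(G)))$ (the graphs we consider have no isolated vertices, so the edgeless case is vacuous).

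Next I would bound $\lambda_{\max}(A(L(G)))$. The vertex of $L(G)$ corresponding to an edge $e=uv$ has degree $d(u)+d(v)-2$, so $\Delta(L(G))=\max\{d(u)+d(v)-2:uv\in E(G)\}$. By Perron--Frobenius, $\lambda_{\max}(A(L(G)))\le\Delta(L(G))$, hence $q(G)\le 2+\Delta(L(G))=\max\{d(u)+d(v):uv\in E(G)\}$, which is the desired inequality; note it needs no connectedness assumption, since $q(G)$ and the degrees may be read off component by component.

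For the equality case I would assume $G$ connected, so that $L(G)$ is connected, and recall that for a connected graph $H$ one has $\lambda_{\max}(A(H))=\Delta(H)$ if and only if $H$ is regular (the standard argument: evaluate the eigenequation at a vertex maximizing the positive Perron vector of $A(H)$, force equality there, and propagate along edges using connectivity). Thus equality in the lemma holds iff $L(G)$ is regular, i.e. iff $d(u)+d(v)$ is constant over all edges $uv$. The final step is to check that, for connected $G$, this degree condition is equivalent to "$G$ regular or $G$ semi-regular bipartite": tracing the degree values along walks shows $d(\cdot)$ takes at most two values that alternate along every edge; if $G$ is bipartite this yields a semi-regular bipartition (regular when the two values coincide), while if $G$ contains an odd closed walk the alternation forces a single value, so $G$ is regular.

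I do not expect a serious obstacle: the result is classical and the line-graph reduction does essentially all the work; the only mild care is the equality analysis, namely the bipartite versus non-bipartite dichotomy and the degenerate case where the two alternating degree values agree. An alternative eigenvector-only route would start from $x^{\top}Q(G)x=\sum_{uv\in E(G)}(x_u+x_v)^{2}$ and run a maximal-coordinate argument, but the line-graph version is shorter and makes the equality characterization more transparent.
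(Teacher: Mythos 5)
The paper does not prove this lemma at all: it is quoted verbatim from the reference \cite{CRS2007} as a known tool. Your argument via $BB^{\top}=Q(G)$, $B^{\top}B=2I+A(L(G))$, the bound $\lambda_{\max}(A(L(G)))\leq\Delta(L(G))$, and the degree-alternation analysis for the equality case is correct and is essentially the classical proof found in that literature, so there is nothing to compare against in the paper itself.
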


\begin{lemma}[\!\cite{Merris1998, Feng2009}]\label{lem4}
For every graph $G$,
\begin{eqnarray*}
q(G)\leq \max\{d(u)+m(u): u\in V(G)\},
\end{eqnarray*}
where $m(u)=\frac{1}{d(u)}\sum\limits_{v\in N(u)}d(v)$.
If $G$ is connected, then the equality holds if and only if $G$ is either a regular graph or a semi-regular bipartite graph.
\end{lemma}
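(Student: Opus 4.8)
The plan is to bound $q(G)$ by passing to a nonnegative matrix that is similar to $Q(G)$ and whose row sums are exactly the quantities $d(u)+m(u)$. Since the paper assumes $G$ has no isolated vertices, the degree matrix $D=D(G)$ is invertible, so I can form $B=D^{-1}Q(G)D$. This $B$ is similar to $Q(G)$, hence has the same spectrum; and because $Q(G)$ is symmetric and positive semidefinite (indeed $x^{\top}Q(G)x=\sum_{uv\in E(G)}(x_u+x_v)^2\ge 0$), its largest eigenvalue $q(G)$ equals its spectral radius, so $q(G)=\rho(B)$. A direct computation gives $B=D+D^{-1}AD$, whose diagonal is $(d(u))_u$ and whose off-diagonal entries are $d(v)/d(u)\ge 0$ for $u\sim v$; the $u$-th row sum is therefore $d(u)+\frac{1}{d(u)}\sum_{v\sim u}d(v)=d(u)+m(u)$. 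Applying the elementary fact that the spectral radius of a nonnegative matrix is at most its largest row sum yields $q(G)=\rho(B)\le\max_{u}\{d(u)+m(u)\}$, which is the asserted inequality.

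For the equality statement I would assume $G$ connected, so that $Q(G)$ and hence $B$ are irreducible and nonnegative. For such matrices the identity between spectral radius and maximal row sum is rigid: taking the positive Perron vector $y$ of $B$ and an index $i_{0}$ at which $y$ is maximal, the chain $\rho\, y_{i_0}=\sum_{j}B_{i_0 j}y_j\le R_{i_0}y_{i_0}\le(\max_i R_i)\,y_{i_0}=\rho\, y_{i_0}$ must collapse to equalities throughout; this forces $y_j=y_{i_0}$ whenever $B_{i_0 j}>0$, and irreducibility propagates this to make $y$ constant, whence all row sums of $B$ coincide. Translating back, equality in the lemma is equivalent to $d(u)+m(u)=c$ for a common constant $c$ and all $u$, equivalently to the degree vector $d=(d(u))_u$ being an eigenvector of $Q(G)$ for the eigenvalue $q(G)=c$.

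It then remains to identify the graphs with $d(u)+m(u)\equiv c$. The forward direction is a one-line check: a $k$-regular graph gives $d(u)+m(u)=2k$, and an $(r,s)$-semiregular bipartite graph gives $d(u)+m(u)=r+s$, both constant. The substantive direction is the converse, which I expect to be the main obstacle. Here I would work with the maximum degree $\Delta$ and minimum degree $\delta$. Evaluating the constant at a vertex of degree $\Delta$ shows its neighbours have average degree $c-\Delta\in[\delta,\Delta]$, giving $c\ge\Delta+\delta$; evaluating it at a vertex of degree $\delta$ gives $c\le\Delta+\delta$. Hence $c=\Delta+\delta$ in every case. Consequently each vertex of degree $\Delta$ has neighbour-average exactly $\delta$, so all of its neighbours have degree $\delta$, and symmetrically each vertex of degree $\delta$ has all neighbours of degree $\Delta$.

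Finally I would convert this local dichotomy into global structure using connectivity. If $\Delta=\delta$ then $G$ is regular. If $\Delta>\delta$, a breadth-first exploration starting from a vertex of degree $\Delta$ and alternating between the two degree conclusions shows that every vertex reached has degree $\Delta$ or $\delta$, that both classes $X=\{u:d(u)=\Delta\}$ and $Y=\{u:d(u)=\delta\}$ are nonempty, and that every edge joins $X$ to $Y$; thus $G$ is $(\Delta,\delta)$-semiregular bipartite. The only points needing careful checking are that the exploration visits all of $V(G)$ (immediate from connectivity) and that no edge can remain inside a class — otherwise a $\Delta$-vertex would have a $\Delta$-neighbour, contradicting that its neighbours all have degree $\delta$ — after which the equality characterization is complete.
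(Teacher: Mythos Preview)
The paper does not supply its own proof of this lemma; it is quoted from \cite{Merris1998,Feng2009} without argument. Your proof is correct and is in fact the standard one given in those references: conjugating $Q(G)$ by $D$ to obtain a nonnegative matrix whose row sums are $d(u)+m(u)$, and then invoking the Perron--Frobenius bound $\rho(B)\le\max_i R_i$, together with the rigidity of that bound in the irreducible case.

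One small remark on your equality analysis: the BFS argument you sketch is indeed what is needed, and it is worth stressing (as you implicitly do) that the constraint $c=\Delta+\delta$ says nothing directly about a vertex of intermediate degree; it is only the propagation from a $\Delta$-vertex, via connectivity, that rules such vertices out. With that caveat made explicit your write-up is complete.
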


The $Q$-index of $C_{5}\circ(n-4,1,1,1,1)$ is estimated in the following lemma.

\begin{lemma}\label{lem6}
Let $q(C_{5}\circ(n-4,1,1,1,1))$ be the $Q$-index of the graph $C_{5}\circ(n-4,1,1,1,1)$. Then the following results hold.\\
(i) $q(C_{5}\circ(n-4,1,1,1,1))$ is the largest root of $x^{3}-(n+2)x^{2}+(3n-2)x-4=0$.\\
(ii) $n-1-\frac{1}{n}<q(C_{5}\circ(n-4,1,1,1,1))<n-\frac{1}{n}$ for $n\geq5$.
\end{lemma}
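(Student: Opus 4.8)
The plan is to locate $q(G)$, where $G=C_{5}\circ(n-4,1,1,1,1)$, through an equitable partition and Lemma~\ref{lem2}. Label $V(C_{5})=\{v_{1},\dots,v_{5}\}$ along the cycle $v_{1}v_{2}v_{3}v_{4}v_{5}v_{1}$, and let $V_{1}$ be the independent set of $n-4$ vertices replacing $v_{1}$. I would check directly that
$$\Pi:\ V(G)=V_{1}\cup\{v_{2},v_{5}\}\cup\{v_{3},v_{4\}}$$
is an equitable partition for $Q(G)=D(G)+A(G)$: a vertex of $V_{1}$ has degree $2$ with both neighbours in $\{v_{2},v_{5}\}$; each of $v_{2},v_{5}$ has degree $n-3$ with $n-4$ neighbours in $V_{1}$ and one in $\{v_{3},v_{4}\}$; each of $v_{3},v_{4}$ has degree $2$ with one neighbour in $\{v_{2},v_{5}\}$ and one in $\{v_{3},v_{4}\}$. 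Hence the quotient matrix is
$$B_{\Pi}=\begin{pmatrix}2&2&0\\ n-4&n-3&1\\ 0&1&3\end{pmatrix}.$$
Since $G$ is connected, $Q(G)$ is nonnegative and irreducible, so Lemma~\ref{lem2} gives $q(G)=\lambda(B_{\Pi})$. Expanding $\det(xI-B_{\Pi})$ along the first row yields $x^{3}-(n+2)x^{2}+(3n-2)x-4$, which is exactly the claimed polynomial, proving (i).

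\textbf{Plan for part (ii).} Put $g(x)=x^{3}-(n+2)x^{2}+(3n-2)x-4$, so $q(G)$ is the largest root of $g$ by~(i); the strategy is to evaluate $g$ at the two candidate endpoints. Substituting $x=n-\tfrac1n$ and simplifying (writing $t=\tfrac1n$ and using $nt=1$ throughout keeps this short) gives
$$g\!\left(n-\tfrac1n\right)=n^{2}-3n-3+\tfrac4n-\tfrac{2}{n^{2}}-\tfrac{1}{n^{3}},$$
which is positive for $n\ge5$ because $n^{2}-3n-3>0$ and $4n^{2}-2n-1>0$. To upgrade this to an upper bound on the \emph{largest} root I would also show $g$ is increasing on $[\,n-\tfrac1n,\infty)$: the parabola $g'(x)=3x^{2}-2(n+2)x+3n-2$ has axis $\tfrac{n+2}{3}<n-\tfrac1n$ and satisfies $g'\!\left(n-\tfrac1n\right)=n^{2}-n-6+\tfrac4n+\tfrac{3}{n^{2}}>0$ for $n\ge3$, so $g'>0$ on that interval; hence $g>0$ there and the largest root of $g$ is strictly less than $n-\tfrac1n$. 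Similarly, substituting $x=n-1-\tfrac1n$ collapses to the clean identity
$$g\!\left(n-1-\tfrac1n\right)=-\tfrac3n-\tfrac{5}{n^{2}}-\tfrac{1}{n^{3}}<0.$$
Since $g$ is negative at $n-1-\tfrac1n$ and positive at $n-\tfrac1n$, it has a root in the open interval $\left(n-1-\tfrac1n,\ n-\tfrac1n\right)$; as the largest root already lies below $n-\tfrac1n$, it must be this one, so it exceeds $n-1-\tfrac1n$. This gives the two-sided bound in (ii).

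\textbf{Main obstacle.} All the arithmetic here is elementary polynomial evaluation, and the genuine content is small: the one thing that cannot be skipped is that evaluating a cubic at a single point does not by itself pin down its largest root, so the monotonicity step $g'>0$ on $[\,n-\tfrac1n,\infty)$ is what actually delivers the upper bound. The only other mild nuisance is bookkeeping in the substitutions $x=n-\tfrac1n$ and $x=n-1-\tfrac1n$, which the substitution $t=\tfrac1n$ together with the relations $nt=1$, $nt^{2}=t$, $n^{2}t=n$ makes routine.
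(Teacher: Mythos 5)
Your proof is correct and follows essentially the same route as the paper: the same equitable partition $V_{1}\cup\{v_{2},v_{5}\}\cup\{v_{3},v_{4}\}$ with the same quotient matrix for (i), and for (ii) the same sign evaluations $f\bigl(n-1-\tfrac1n\bigr)<0$, $f\bigl(n-\tfrac1n\bigr)>0$ combined with $f'>0$ on $[\,n-\tfrac1n,\infty)$. The only difference is that you write out the explicit values of $f$ and $f'$ at the test points, which the paper leaves to the reader; your computations check out.
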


\begin{proof}
(i) We partition the vertex set of $C_{5}\circ(n-4,1,1,1,1)$ as $\Pi$: $V_{1}\cup V_{2}\cup V_{3}$,
where $V_{2}=\{v_{2},v_{5}\}$, $V_{3}=\{v_{3},v_{4}\}$ and $V_{1}$ contains the remaining vertices (see Fig. \ref{1}).
The quotient matrix $B_{\Pi}$ of the $Q$-matrix $Q(C_{5}\circ(n-4,1,1,1,1))$ with respect to the partition $\Pi$ is as follows:
$$
B_{\Pi}=
\begin{pmatrix}
2&2&0\\
n-4&n-3&1\\
0&1&3
\end{pmatrix}.
$$
Then the characteristic polynomial of $B_{\Pi}$ is
\begin{eqnarray*}
f(x)=|xI_{3}-B_{\Pi}|=x^{3}-(n+2)x^{2}+(3n-2)x-4.
\end{eqnarray*}
Note also that $\Pi$ is an equitable partition.
According to Lemma \ref{lem2}, it follows that $q(C_{5}\circ(n-4,1,1,1,1))$ is the largest root of $x^{3}-(n+2)x^{2}+(3n-2)x-4=0$.
Hence the result $(i)$ follows.

(ii) Since $f(n-1-\frac{1}{n})<0$, then $q(C_{5}\circ(n-4,1,1,1,1))>n-1-\frac{1}{n}$.
Note that $f'(x)=3x^{2}-2(n+2)x+3n-2>0$ for $x\geq n-\frac{1}{n}$ and $n\geq5$.
It follows that $f(x)\geq f(n-\frac{1}{n})>0.$
Hence $q(C_{5}\circ(n-4,1,1,1,1))<n-\frac{1}{n}$, and $(ii)$ holds.
\end{proof}

Assume that $G$ is the graph with the maximum $Q$-index among all the non-bipartite triangle-free graphs of order $n$. We first prove a
general structure of the extremal graph $G$.

\begin{lemma}\label{lem10}
Let $G$ attain the maximum $Q$-index among all the non-bipartite triangle-free graphs of order $n$. Then we have
$$ G\cong C_{5}\circ(n_{1},n_{2},n_{3},n_{4}, 1).$$
\end{lemma}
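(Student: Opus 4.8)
The plan is to exploit the strong lower bound on $q(G)$ coming from extremality: since $C_{5}\circ(n-4,1,1,1,1)$ is itself a non-bipartite triangle-free graph of order $n$, maximality of $G$ together with Lemma~\ref{lem6}(ii) gives $q(G)\ge q(C_{5}\circ(n-4,1,1,1,1))>n-1-\tfrac1n>n-2$. First I would note $G$ is connected: otherwise adding an edge between two components creates no triangle (the endpoints have no common neighbour), keeps $G$ non-bipartite, and strictly increases the $Q$-index, contradicting maximality; so $G$ is connected and has a positive Perron vector. By Lemma~\ref{lem5} there is an edge $u_{0}v_{0}$ with $d(u_{0})+d(v_{0})\ge q(G)>n-2$, hence $d(u_{0})+d(v_{0})\ge n-1$.

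\textbf{Reduction to ``complete bipartite plus one vertex''.} Since $G$ is triangle-free, $N(u_{0})\cap N(v_{0})=\emptyset$, so $|N(u_{0})\cup N(v_{0})|=d(u_{0})+d(v_{0})\ge n-1$; thus at most one vertex of $G$ lies outside $N(u_{0})\cup N(v_{0})$. If no vertex did, then $V(G)=N(u_{0})\cup N(v_{0})$ would be a bipartition (both $N(u_{0})$ and $N(v_{0})$ are independent sets), contradicting non-bipartiteness. Hence there is exactly one vertex $r\notin N(u_{0})\cup N(v_{0})$, and, writing $A=N(v_{0})$ and $B=N(u_{0})$, we get $V(G)=A\sqcup B\sqcup\{r\}$ with $A,B$ independent, $u_{0}\in A$, $v_{0}\in B$, and $r$ adjacent to neither $u_{0}$ nor $v_{0}$. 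In particular $G-r$ is bipartite with parts $A,B$.

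\textbf{Finishing via maximality.} Put $A_{r}=N(r)\cap A$ and $B_{r}=N(r)\cap B$, so $N(r)=A_{r}\cup B_{r}$. Non-bipartiteness forces $A_{r}\ne\emptyset$ and $B_{r}\ne\emptyset$ (if, say, $A_{r}=\emptyset$, then $\{A\cup\{r\},B\}$ is a bipartition), triangle-freeness forces no edge between $A_{r}$ and $B_{r}$ (each such pair has common neighbour $r$), and since $r\not\sim u_{0},v_{0}$ we have $u_{0}\in A\setminus A_{r}$ and $v_{0}\in B\setminus B_{r}$; thus all five sets $A_{r},\,B\setminus B_{r},\,A\setminus A_{r},\,B_{r},\,\{r\}$ are nonempty. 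Now use maximality once more: adding any edge to the connected graph $G$ keeps it non-bipartite and strictly increases $q$, so every non-adjacent pair $x,y$ satisfies $N(x)\cap N(y)\ne\emptyset$. Applying this to $x\in A$, $y\in B$ with $xy\notin E(G)$ and using $N(x)\subseteq B\cup\{r\}$, $N(y)\subseteq A\cup\{r\}$, we get $r\in N(x)\cap N(y)$, i.e.\ $x\in A_{r}$, $y\in B_{r}$. Hence $G[A,B]$ is the complete bipartite graph on $(A,B)$ with exactly the pairs in $A_{r}\times B_{r}$ deleted; checking the remaining adjacencies shows that, cyclically, each of $A_{r},\,B\setminus B_{r},\,A\setminus A_{r},\,B_{r},\,\{r\}$ is completely joined to the next and to no other set, so $G\cong C_{5}\circ(|A_{r}|,|B\setminus B_{r}|,|A\setminus A_{r}|,|B_{r}|,1)$, as desired.

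\textbf{Main obstacle.} The one genuinely load-bearing step is the second paragraph: turning the numerical inequality $d(u_{0})+d(v_{0})\ge n-1$ into the rigid ``complete bipartite graph plus a single vertex'' picture while keeping track of non-bipartiteness; once that picture is in place, the rest is short bookkeeping with the common-neighbour condition. One should also handle the small cases at the outset: for $n<5$ there is no non-bipartite triangle-free graph, and for $n=5$ one has $G=C_{5}=C_{5}\circ(1,1,1,1,1)$.
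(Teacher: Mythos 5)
Your proof is correct and takes essentially the same approach as the paper: both find an edge $u_0v_0$ with $d(u_0)+d(v_0)=n-1$ via Lemma \ref{lem5} and Lemma \ref{lem6}, isolate the unique vertex $r=w$ outside $N(u_0)\cup N(v_0)$, and split each neighbourhood according to adjacency to $r$, using the ``add a missing edge'' maximality argument to force the complete joins. Your packaging of that last step as a single principle (every non-adjacent pair must share a common neighbour) is a slightly cleaner way to organize the same argument the paper carries out pair-of-parts by pair-of-parts.
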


\begin{proof}
Suppose that $G$ attains the maximum $Q$-index among all the non-bipartite triangle-free graphs of order $n$. Then we have $n\geq5.$
Our goal is to show that $G\cong C_{5}\circ(n-4,1,1,1,1)$.
First we claim that $G$ is connected.
If not, we assume that $G_{1}$ is a connected component of $G$ with $q(G_{1})=q(G)$.
By adding an edge between $G_{1}$ and any other connected component of $G,$ we get a new graph $G_{2}.$
Note that $G_{2}$ is still a non-bipartite triangle-free graph. However, $q(G_{2})>q(G_{1})=q(G),$ which contradicts the maximality of $q(G).$

Let $u'v'$ be the edge of $G$ with $d(u')+d(v')=\max\{d(u)+d(v): uv\in E(G)\}$.
According to Lemmas \ref{lem5} and \ref{lem6}, we have
\begin{eqnarray}\label{eq1}
n-2<n-1-\frac{1}{n}<q(C_{5}\circ(n-4,1,1,1,1))\leq q(G)\leq d(u')+d(v').
\end{eqnarray}
By (\ref{eq1}), it follows that $d(u')+d(v')\geq n-1$. Note that $G$ is triangle-free. Then $u'$ and $v'$ have no common neighbours, and this implies that $d(u')+d(v')\leq n.$
If $d(u')+d(v')=n,$ then $G$ is a bipartite graph, a contradiction, and hence $d(u')+d(v')\leq n-1.$
From the above two aspects, we always have $d(u')+d(v')=n-1$, and this implies that there exists a vertex $w$ of $G$ such that $w\notin N(u')\cup N(v')$ (see Fig. \ref{3}).
\input{f3.TpX}\label{f3}

Define $N_{1}=\{v\in N(u') : vw\in E(G)\}$, $N_{2}=\{v\in N(v'): vw\in E(G)\}$, $N_{3}=N(u')\backslash (N_{1}\cup \{v'\})$ and $N_{4}=N(v')\backslash (N_{2}\cup \{u'\}).$ Since $G$ is non-bipartite, one can see that $N_{1}\neq \emptyset$ and $N_{2}\neq \emptyset$.
Note that both $N_{1}$ and $N_{4}$ are independent sets as $G$ is triangle-free.
We claim that $G[N_{1}\cup N_{4}]$ is a complete bipartite graph.
Suppose that there exist two vertices $v_{1}\in N_{1}$ and $v_{2}\in N_{4}$ such that $v_{1}v_{2}\notin E(G).$
Then $G+v_{1}v_{2}$ is still non-bipartite triangle-free.
However, $q(G+v_{1}v_{2})>q(G),$ a contradiction.
By the same analysis, $G[N_{2}\cup N_{3}]$ and $G[N_{3}\cup N_{4}]$ are also complete bipartite graphs (see Fig. \ref{3}).

Let $V_{1}=N_{1}$, $V_{2}=\{u'\}\cup N_{4}$, $V_{3}=N_{3}\cup\{v'\}$ and $V_{4}=N_{2}.$ Then $\cup_{i=1}^{4}V_{i}=V(G)\backslash\{w\}.$
Set $|V_{i}|=n_{i}$ for $1\leq i\leq 4.$ Then we have $\sum_{i=1}^{4}n_{i}=n-1.$
Recall that $N_{1}\neq \emptyset, u'\in V_{2}, v'\in V_{3}$ and $N_{2}\neq \emptyset.$ Hence we have $n_{i}\geq1,$ where $1\leq i\leq 4.$
It is easy to see that $G\cong C_{5}\circ(n_{1},n_{2},n_{3},n_{4}, 1)$ (see Fig. \ref{3}).
\end{proof}

Now we are ready to give the proof of Theorem \ref{main1}.

\medskip
\noindent\textbf{Proof of Theorem \ref{main1}.}
Suppose that $G$ attains the maximum $Q$-index among all the non-bipartite triangle-free graphs of order $n$. By Lemma \ref{lem10}, we have
$G\cong C_{5}\circ(n_{1},n_{2},n_{3},n_{4}, 1).$ Let $X=(x_{v})_{v\in V (G)}$ be the Perron vector of $Q(G)$.
Let $u^{\ast}$ be a vertex of $G$ such that $d(u^{\ast})+m(u^{\ast})=\max\{d(u)+m(u):u\in V(G)\}.$
By Lemma \ref{lem4}, we have $q(G)\leq d(u^{\ast})+m(u^{\ast})$. Next we discuss the following three cases according to different $u^{\ast}.$

\vspace{1.5mm}
\noindent{\bf Case 1.} $u^{\ast}=w$.
\vspace{0.5mm}

Without loss of generality, we may assume that $n_{2}\leq n_{3}$. It follows that
\begin{eqnarray}\label{eq2}
d(w)+m(w)=n_{1}+n_{4}+\frac{n_{1}(1+n_{2})+n_{4}(1+n_{3})}{n_{1}+n_{4}}
\leq n_{1}+n_{4}+n_{3}+1=n-n_{2}.
\end{eqnarray}
By Lemma \ref{lem6}, we know that $q(G)>n-1-\frac{1}{n}.$ Combining Lemma \ref{lem4} and (\ref{eq2}), we have $$n-1-\frac{1}{n}<q(G)\leq d(w)+m(w)\leq n-n_{2}.$$
Hence $n_{2}<1+\frac{1}{n}$, and thus we have $n_{2}=1.$
By taking $n_{2}=1$ in (\ref{eq2}), we have
\begin{eqnarray*}
n-1-\frac{1}{n}<d(w)+m(w)=n-1-n_{3}+\frac{n_{1}+n_{3}n_{4}}{n_{1}+n_{4}}=n-1-\frac{n_{1}(n_{3}-1)}{n_{1}+n_{4}},
\end{eqnarray*}
which leads to $\frac{1}{n}>\frac{n_{1}(n_{3}-1)}{n_{1}+n_{4}}$, and hence $n_{3}=1$. Recall that $n_{4}\geq1.$
So we can choose a vertex $v_{4}\in V_{4}.$
If $n_{4}\geq 2,$ then there exists at least one vertex $z\in V_{4}\backslash \{v_{4}\}.$
Note that $V_{2}=\{u'\}$ and $V_{3}=\{v'\}$. Without loss of generality, we assume that $x_{u'}\geq x_{v'}$.
Define a new graph $G'=G-zv'+zu'$ for $z\in V_{4}\backslash \{v_{4}\}.$
Note that $G'$ is still a non-bipartite triangle-free graph.
According to Lemma \ref{lem3}, we have $q(G')>q(G)$, a contradiction. Hence $n_{4}=1,$ and then $n_{1}=n-4.$ So we have $G\cong C_{5}\circ(n-4,1,1,1,1).$

\vspace{1.5mm}
\noindent{\bf Case 2.} $u^{\ast}\in V_{1}\cup V_{4}.$ \label{case2}
\vspace{0.5mm}

By symmetry, we may assume that $u^{\ast}\in V_{1}.$ Then we have
\begin{eqnarray}\label{eq3}
d(u^{\ast})+m(u^{\ast})&=&n_{2}+1+\frac{n_{1}+n_{4}+n_{2}(n_{1}+n_{3})}{n_{2}+1}=n_{2}+1+n_{1}+\frac{n_{4}+n_{2}n_{3}}{n_{2}+1}\nonumber\\
&=&n_{2}+1+n_{1}+n_{3}+\frac{n_{4}-n_{3}}{n_{2}+1}=n-n_{4}+\frac{n_{4}-n_{3}}{n_{2}+1}.
\end{eqnarray}
By Lemma \ref{lem6} and (\ref{eq3}), we have $$n-1-\frac{1}{n}<q(G)\leq d(u^{\ast})+m(u^{\ast})=n-n_{4}+\frac{n_{4}-n_{3}}{n_{2}+1}.$$
It follows that
\begin{eqnarray*}
n_{2}(n_{4}-1-\frac{1}{n})+n_{3}-1-\frac{1}{n}<0,
\end{eqnarray*}
and hence we have $n_{3}=1$ and $n_{4}=1.$ This implies that $G\cong C_{5}\circ(n_{1},n_{2},1,1,1).$
The quotient matrix $B_{\Pi}$ of $Q(G)$ on the partition $\Pi: V(G)=V_{1}\cup V_{2}\cup V_{3} \cup V_{4}\cup\{w\}$ is as follows:
$$B_{\Pi}=
\begin{pmatrix}
1+n_{2}&n_{2}&0&0&1\\
n_{1}&1+n_{1}&1&0&0\\
0&n_{2}&1+n_{2}&1&0\\
0&0&1&2&1\\
n_{1}&0&0&1&1+n_{1}
\end{pmatrix}
.$$
By a direct calculation, the characteristic polynomial $f(n_{1},n_{2},x)$ of the quotient matrix $B_{\Pi}$ is
\begin{eqnarray*}
f(n_{1},n_{2},x)&=&x^{5}-2(n_{1}+n_{2}+3)x^{4}+\Big((n_{1}+n_{2})^{2}+9(n_{1}+n_{2})+n_{1}n_{2}+12\Big)x^{3}\\
&&-\Big(3(n_{1}+n_{2})^{2}+11(n_{1}+n_{2})+n_{1}n_{2}(n_{1}+n_{2}+4)+10\Big)x^{2}\\
&&+\Big((n_{1}+n_{2})^{2}+4(n_{1}+n_{2})+2n_{1}n_{2}(n_{1}+n_{2}+3)+3\Big)x\\
&&-4n_{1}n_{2}.
\end{eqnarray*}
Since the partition $\Pi$ is equitable, it follows from Lemma \ref{lem2} that
$q(G)$ is equal to the largest root of $f(n_{1},n_{2},x)=0.$ Without loss of generality, we assume that $n_{1}\geq n_{2}.$
Next we will prove that $n_{2}=1$. Suppose to the contrary that $n_{2}\geq 2$.
Define $G'=C_{5}\circ(n_{1}+1,n_{2}-1,1,1,1)$. Obviously, $G'$ is still non-bipartite triangle-free.
By Lemma \ref{lem2}, $q(G')$ equals the largest root of $f(n_{1}+1,n_{2}-1,x)=0$. Note that
\begin{eqnarray*}
f(n_{1}+1,n_{2}-1,x)&=&x^{5}-2(n_{1}+n_{2}+3)x^{4}+\Big((n_{1}+n_{2})^{2}+9(n_{1}+n_{2})\\
&&+(n_{1}+1)(n_{2}-1)+12\Big)x^{3}-\Big(3(n_{1}+n_{2})^{2}+11(n_{1}+n_{2})\\
&&+(n_{1}+1)(n_{2}-1)(n_{1}+n_{2}+4)+10\Big)x^{2}+\Big((n_{1}+n_{2})^{2}\\
&&+4(n_{1}+n_{2})+2(n_{1}+1)(n_{2}-1)(n_{1}+n_{2}+3)+3\Big)x\\
&&-4(n_{1}+1)(n_{2}-1).
\end{eqnarray*}
Hence we have
\begin{eqnarray*}
f(n_{1},n_{2},x)-f(n_{1}+1,n_{2}-1,x)=(2n_{1}-n+4)(x-2)(x^{2}-(n-1)x+2).
\end{eqnarray*}
Since $q(G)>n-1-\frac{1}{n}\geq\frac{19}{5}$, then we have $x-2>0$ and $x^{2}-(n-1)x+2>0$ for any $x\geq q(G).$
Note also that $2n_{1}-n+4>0.$ Hence we have $f(n_{1},n_{2},x)>f(n_{1}+1,n_{2}-1,x)$ for any $x\geq q(G).$
So we obtain that $q(G')>q(G)$, a contradiction. Therefore, $n_{2}=1.$ This implies that $G\cong C_{5}\circ(n-4,1,1,1,1),$ as desired.

\vspace{1.5mm}
\noindent{\bf Case 3.} $u^{\ast}\in V_{2}\cup V_{3}.$
\vspace{0.5mm}

Without loss of generality, we assume that $u^{\ast}\in V_{2}$. Then we have
\begin{eqnarray}\label{eq4}
d(u^{\ast})+m(u^{\ast})&=&n_{1}+n_{3}+\frac{n_{1}(1+n_{2})+n_{3}(n_{2}+n_{4})}{n_{1}+n_{3}}=n_{1}+n_{2}+n_{3}+\frac{n_{1}+n_{3}n_{4}}{n_{1}+n_{3}}\nonumber\\
&=&n_{1}+n_{2}+n_{3}+n_{4}-\frac{n_{1}(n_{4}-1)}{n_{1}+n_{3}}=n-1-\frac{n_{1}(n_{4}-1)}{n_{1}+n_{3}}.
\end{eqnarray}
Since $d(u^{\ast})+m(u^{\ast})>n-1-\frac{1}{n}$, it follows from (\ref{eq4}) that $\frac{1}{n}>\frac{n_{1}(n_{4}-1)}{n_{1}+n_{3}}.$
This implies that $n_{4}=1$. Let $V_{4}=\{v_{4}\}$. Without loss of generality, we assume that $x_{w}\geq x_{v_{4}}.$
Note that $n_{3}\geq1.$ Then we can choose a vertex $v_{3}\in V_{3}.$ If $n_{3}\geq 2$, then there exists a vertex
$z\in V_{3}\backslash \{v_{3}\}.$ Let $G'=G-zv_{4}+zw.$ Obviously, $G'$ is non-bipartite triangle-free.
However, by Lemma \ref{lem3}, we have $q(G')>q(G),$ which contradicts the maximality of $q(G)$. Hence $n_{3}=1$, that is, $G\cong C_{5}\circ(n_1,n_{2},1,1,1)$.
Next similar to the argument in Case \ref{case2}, we can prove that $G\cong C_{5}\circ(n-4,1,1,1,1).$ This completes the proof.
\hspace*{\fill}$\Box$

\section{Proof of Theorem \ref{main2}}

In the section, we first present the $Q$-index of the extremal graph $C_{5}\bullet K_{1,m-5}.$

\begin{lemma}
Let $q(C_{5}\bullet K_{1,m-5})$ be the $Q$-index of $C_{5}\bullet K_{1,m-5}$. Then
$q(C_{5}\bullet K_{1,m-5})$ is the largest root of $x^{4}-(m+3)x^{3}+(5m-5)x^{2}+(8-5m)x+4=0.$
\end{lemma}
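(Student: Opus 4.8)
The plan is to compute the characteristic polynomial of an equitable quotient matrix of $Q(C_{5}\bullet K_{1,m-5})$ with respect to a partition that respects the automorphisms of the graph, and then invoke Lemma~\ref{lem2}. Label the $5$-cycle $v_1v_2v_3v_4v_5v_1$, and suppose the $m-5$ pendant vertices are all attached to $v_1$ (the vertex identified with the center of $K_{1,m-5}$). The natural partition is $\Pi: V_1=\{\text{the }m-5\text{ pendant vertices}\}$, $V_2=\{v_1\}$, $V_3=\{v_2,v_5\}$, $V_4=\{v_3,v_4\}$. Every pendant vertex has degree $1$ and its only neighbor is $v_1$; $v_1$ has degree $(m-5)+2 = m-3$ with neighbors being the $m-5$ pendants and $v_2,v_5$; each of $v_2,v_5$ has degree $2$ with one neighbor $v_1$ and one neighbor in $\{v_3,v_4\}$; each of $v_3,v_4$ has degree $2$ with both neighbors in $\{v_2,v_5\}\cup\{v_3,v_4\}$ — specifically one neighbor in $V_3$ and one in $V_4$. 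So the partition is equitable.

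The quotient matrix of $Q = D+A$ with respect to $\Pi$ is
$$
B_{\Pi}=
\begin{pmatrix}
1 & 1 & 0 & 0\\
m-5 & m-3 & 2 & 0\\
0 & 1 & 2 & 1\\
0 & 0 & 1 & 2
\end{pmatrix},
$$
where the diagonal entries are the degrees ($1$, $m-3$, $2$, $2$) and the off-diagonal entry $b_{i,j}$ records the (constant) number of neighbors in $V_j$ of a vertex in $V_i$. Next I would form $f(x)=\det(xI_4-B_{\Pi})$ and expand, e.g.\ along the first row or by cofactor expansion on the block structure, to verify that it equals $x^{4}-(m+3)x^{3}+(5m-5)x^{2}+(8-5m)x+4$. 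Since $C_{5}\bullet K_{1,m-5}$ is connected, $Q(G)$ is nonnegative and irreducible, so by Lemma~\ref{lem2} its largest eigenvalue $q(C_{5}\bullet K_{1,m-5})$ coincides with the largest eigenvalue of $B_{\Pi}$, which is the largest root of $f(x)=0$.

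The only real work is the determinant expansion, which is a routine $4\times 4$ computation: one checks the coefficients of $x^3$ (sum of diagonal entries, $= (1)+(m-3)+2+2 = m+2$ — a quick sanity check shows the paper's polynomial has $-(m+3)$, so I should be careful about whether the stated polynomial has an extra root, perhaps the partition or the attachment vertex differs; in any case the method is to recompute $\det(xI-B_\Pi)$ honestly and confirm it matches the displayed quartic), the coefficient of $x^2$ via the sum of $2\times 2$ principal minors, the coefficient of $x$ via the sum of $3\times 3$ principal minors, and the constant term $\det B_{\Pi}$. The main (minor) obstacle is simply bookkeeping the signs and the terms linear in $m$ correctly; there is no conceptual difficulty, as the equitability of $\Pi$ and Lemma~\ref{lem2} do all the structural lifting. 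If a discrepancy persists, I would re-examine exactly which vertex of $C_5$ is identified with the star's center and adjust $\Pi$ accordingly before re-expanding.
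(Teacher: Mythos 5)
Your overall strategy is exactly the paper's: form an equitable partition into the pendant vertices, the star center, and the two symmetric pairs of the $C_5$, take the quotient matrix of $Q$, and apply Lemma~\ref{lem2}. But your quotient matrix contains a concrete error, and you never carry out the determinant computation that would have exposed it. The entry $b_{44}$ for the class $V_4=\{v_3,v_4\}$ must be $3$, not $2$: for the signless Laplacian $Q=D+A$, the row sum of the diagonal block $Q_{V_i,V_i}$ is $d(v)$ \emph{plus} the number of neighbours of $v$ inside $V_i$, and $v_3v_4$ is an edge of the cycle, so each of $v_3,v_4$ contributes $2+1=3$. You even recorded the relevant fact ("one neighbor in $V_3$ and one in $V_4$") and then discarded it by declaring "the diagonal entries are the degrees," which is only true for classes that are independent sets. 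Your own sanity check caught the symptom --- trace $m+2$ versus the required $m+3$ --- but you misattributed it to a possible extra root or a different attachment vertex rather than to the missing within-class adjacency, and left the proof unfinished at "Next I would form $f(x)$ and expand."

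With the corrected matrix
$$
B_{\Pi}=
\begin{pmatrix}
1 & 1 & 0 & 0\\
m-5 & m-3 & 2 & 0\\
0 & 1 & 2 & 1\\
0 & 0 & 1 & 3
\end{pmatrix},
$$
one gets $\det(xI_4-B_{\Pi})=(x^2-5x+5)\bigl(x^2+(2-m)x+2\bigr)-2(x^2-4x+3)=x^{4}-(m+3)x^{3}+(5m-5)x^{2}+(8-5m)x+4$, which is the stated quartic; your appeal to connectedness, irreducibility and Lemma~\ref{lem2} then finishes the argument as in the paper (whose partition is the same up to relabelling which cycle vertex carries the star). As submitted, however, the proof rests on a wrong matrix and an uncompleted calculation, so it does not establish the lemma.
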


\begin{proof}
We partition the vertex set of $C_{5}\bullet K_{1,m-5}$ as $\Pi$: $V_{1}\cup V_{2}\cup V_{3}\cup V_{4}$,
where $V_{1}=\{v_{1},v_{2}\}$, $V_{2}=\{v_{3},v_{5}\}$, $V_{4}=\{v_{4}\}$ and $V_{3}$ contains the remaining vertices (see Fig. \ref{2}).
The quotient matrix $B_{\Pi}$ of the $Q$-matrix $Q(C_{5}\bullet K_{1,m-5})$ with respect to the partition $\Pi$ is
$$
B_{\Pi}=
\begin{pmatrix}
3&1&0&0\\
1&2&0&1\\
0&0&1&1\\
0&2&m-5&m-3
\end{pmatrix}.
$$
Then the characteristic polynomial of $B_{\Pi}$ is as follows:
\begin{eqnarray*}
f(x)=|xI_{4}-B_{\Pi}|=x^{4}-(m+3)x^{3}+(5m-5)x^{2}+(8-5m)x+4.
\end{eqnarray*}
Note that $\Pi$ is an equitable partition.
By Lemma \ref{lem2}, it follows that $q(C_{5}\bullet K_{1,m-5})$ is the largest root of $x^{4}-(m+3)x^{3}+(5m-5)x^{2}+(8-5m)x+4=0.$
\end{proof}

Now, we are in a position to present the proof of Theorem \ref{main2}.

\medskip
\noindent  \textbf{Proof of Theorem \ref{main2}}.
Suppose that $G$ attains the maximum $Q$-index among all the non-bipartite triangle-free graphs of size $m.$
Note that a non-bipartite graph $G$ of size $m\leq5$ must contain a triangle unless $G\cong C_{5}$.
In the following, it suffices to consider the case $m\geq6.$
Our aim is to show that $G\cong C_{5}\bullet K_{1,m-5}$.
First we claim that $G$ is connected.
Otherwise, suppose that $G$ contains $k$ connected components $G_{1},G_{2},\ldots,G_{k}$,
where $q(G)=q(G_{i_{0}})$ for some $i_{0}\in \{1,2,\ldots,k\}$.
Select a vertex $u_{i}\in V(G_{i})$ for each $i\in \{1,2,\ldots,k\},$ and let $G'$ be the graph obtained from
$G$ by identifying vertices $u_{1},u_{2},\ldots,u_{k}.$
It is clearly that $G'$ is also a non-bipartite triangle-free graph of size $m$.
Moreover, it contains $G_{i_{0}}$ as a proper subgraph, and hence $q(G')>q(G_{i_{0}})=q(G)$, contradicting the maximality of $q(G).$

Note that $C_{5}\bullet K_{1,1}$ is the only non-bipartite triangle-free graph with size $m=6$. Hence the theorem holds trivially for $m=6.$
Next we consider the case $m=7$. Let $X=(x_{v})_{v\in V (G)}$ be the Perron vector of $Q(G)$. Let $C$ be the shortest odd cycle of $G$.
If $|V(C)|=7$, then $G\cong C_{7}$. However, $q(C_{7})=4<q(C_{5}\bullet K_{1,2})$, a contradiction.
Then $|V(C)|=5,$ and $G$ must be isomorphic to one of the following graphs: $H_{1}$, $H_{2}$, $H_{3}$ (see Fig. \ref{4}) and $C_{5}\bullet K_{1,2}$.
If $G\cong H_{1}$ or $G\cong H_{2}$, then $d(u)+d(v)\leq5$ for each edge $uv\in E(G).$ By Lemma \ref{lem5}, we have $q(G)\leq5<q(C_{5}\bullet K_{1,2})$,
which contradicts the maximality of $q(G).$
If $G\cong H_{3},$ without loss of generality, we assume that $x_{u_{1}}\geq x_{u_{2}}$, where $u_{1}$ and $u_{2}$ are the neighbours of $v_{1}$ and $v_{2}$, respectively. According to Lemma \ref{lem3}, we have $q(G-v_{2}u_{2}+v_{2}u_{1})>q(G)$, a contradiction. Hence $G\cong C_{5}\bullet K_{1,2}$, and the theorem holds for $m=7.$
\input{f4.TpX}

Next we assume that $m\geq 8$. Note that $K_{1,m-3}$ is a proper subgraph of $C_{5}\bullet K_{1,m-5}$
and $q(K_{1,m-3})=m-2$, then $q(C_{5}\bullet K_{1,m-5})>m-2$.
Combining the fact $q(G)\geq q(C_{5}\bullet K_{1,m-5})$, it follows that
\begin{eqnarray}\label{eq10}
q(G)>m-2.
\end{eqnarray}
Let $\Delta$ be the maximum degree of $G.$ If $\Delta\geq m-2,$ then either $G$ is bipartite or $G$ contains a triangle, a contradiction.
Hence we have $\Delta \leq m-3.$
Let $u^{\ast}$ be a vertex of $G$ such that $d(u^{\ast})+m(u^{\ast})=\max\{d(u)+m(u): u\in V(G)\}$.
By Lemma \ref{lem4} and (\ref{eq10}), we have
\begin{eqnarray}\label{eq7}
  d(u^{\ast})+m(u^{\ast})=d(u^{\ast})+\frac{\sum_{v\in N(u^{\ast})}d(v)}{d(u^{\ast})}\geq q(G)>m-2.
\end{eqnarray}
Now we divide the proof into three cases according to different values of $d(u^{\ast}).$

\vspace{1.5mm}
\noindent{\bf Case 1.} $d(u^{\ast})=1$.
\vspace{0.5mm}

Let $v$ be the only neighbour of $u^{\ast}.$ It follows from (\ref{eq7}) that $1+d(v)>m-2$, and hence $d(v)>m-3,$ which contradicts the fact $\Delta\leq m-3$.

\vspace{1.5mm}
\noindent{\bf Case 2.} $2\leq d(u^{\ast})\leq m-4$.
\vspace{0.5mm}

Let $N(u^{\ast})$ be the set of neighbours of $u^{\ast}$ in $G$. Since $G$ is triangle-free, then $N(u^{\ast})$ is an independent set.
Hence we have
\begin{eqnarray*}
\sum_{v\in N(u^{\ast})}d(v)=|E\big(N(u^{\ast}), V(G)\backslash N(u^{\ast})\big)|\leq m,
\end{eqnarray*}
where $E\big(N(u^{\ast}), V(G)\backslash N(u^{\ast})\big)$ denotes the set of edges between $N(u^{\ast})$ and $V(G)\backslash N(u^{\ast})$.
It follows that
\begin{eqnarray*}
d(u^{\ast})+m(u^{\ast})=d(u^{\ast})+\frac{\sum_{v\in N(u^{\ast})}d(v)}{d(u^{\ast})}\leq d(u^{\ast})+\frac{m}{d(u^{\ast})}.
\end{eqnarray*}
Note that the function $f(x)=x+\frac{m}{x}$ is convex for $x>0.$
Hence the maximum of the expression $d_{u^{\ast}}+\frac{m}{d_{u^{\ast}}}$ is attained at $d_{u^{\ast}}=2$ or $d_{u^{\ast}}=m-4$.
Since $m\geq8$, then we have
\begin{eqnarray*}
d(u^{\ast})+\frac{m}{d(u^{\ast})}\leq\max\{2+\frac{m}{2}, m-4+\frac{m}{m-4}\}\leq m-2,
\end{eqnarray*}
which contradicts (\ref{eq7}).

\vspace{1.5mm}
\noindent{\bf Case 3.} $d(u^{\ast})=m-3$.
\vspace{0.5mm}

Let $T=V(G)\backslash (N(u^{\ast})\cup\{u^{\ast}\})$, and let $e(T)$ denote the number of edges in the induced subgraph $G[T]$.
Since $G$ is triangle-free, then $N(u^{\ast})$ is an independent set.
Note that $G$ is a connected non-bipartite graph. Hence $e(T)=1$. Let $e_{0}$ be the only edge in $G[T]$.
This implies that there are exactly two edges between $N(u^{\ast})$ and $T$.
Note also that $G$ is non-bipartite triangle-free. Then these two edges are independent and both of them are adjacent to $e_{0}$,
which implies that $G\cong C_{5}\bullet K_{1,m-5}$. This completes the proof.\hspace*{\fill}$\Box$
\medskip

\section*{Acknowledgements}

The research of Ruifang Liu is supported by National Natural Science Foundation of China (Nos. 11971445 and 12171440) and Natural Science Foundation of Henan (No. 202300410377). The research of Jie Xue is supported by National Natural Science Foundation of China (No. 12001498)
and China Postdoctoral Science Foundation (No. 2022TQ0303).

\end{document}